\numberwithin{figure}{section}
\newcommand\bve{\mbox{\boldmath${ \varepsilon}$}}
\newcommand\bW{{\bf W}}
\newcommand{\lf}{\lfloor}
\newcommand{\rf}{\rfloor}
\newtheorem{rem}{Remark}[section]
\numberwithin{figure}{section}
\newtheorem{theorem}{Theorem}[section]
\newtheorem{lemma}{Lemma}[section]
\newtheorem{assumption}{Assumption}[section]
\newtheorem{condition}{Condition}[section]
\newcommand{\vare}{\varepsilon}
\newcommand\bbe{\mbox{\boldmath${ \beta}$}}
\newcommand\btheta{\mbox{\boldmath${ \theta}$}}
\newcommand\bTheta{\mbox{\boldmath${ \Theta}$}}
\newcommand\bX{{\bf X}}
\newcommand\T{\top}
\newcommand\bC{{\bf C}}
\newcommand\bx{{\bf x}}
\newcommand\bZ{{\bf Z}}
\def\beq{\begin{equation}}
\def\eeq{\end{equation}}
\def\bals{\begin{align*}}
\def\eals{\end{align*}}
\def\bal{\begin{align}}
\def\eal{\end{align}}
\numberwithin{equation}{section}
\numberwithin{theorem}{section}
\numberwithin{table}{section}
\begin{document}

\title[R\'enyi change point statistics]{A new class of change point test statistics of R\'enyi type }

\author {Lajos Horv\'ath}

\author{Curtis Miller}

\author{Gregory Rice}



\begin{abstract}
A new class of change point test statistics is proposed that utilizes a weighting and trimming scheme for the cumulative sum (CUSUM) process inspired by R\'enyi (1953). A thorough asymptotic analysis and simulations both demonstrate that this new class of statistics possess superior power compared to traditional change point statistics based on the CUSUM process when the change point is near the beginning or end of the sample. Generalizations of these ``R\'enyi" statistics are also developed to test for changes in the parameters in linear and non-linear regression models, and in generalized method of moments estimation. In these contexts we applied the proposed statistics, as well as several others, to test for changes in the coefficients of Fama-French factor models. We observed that the R\'enyi statistic was the most effective in terms of retrospectively detecting change points that occur near the endpoints of the sample.
\end{abstract}

\maketitle

\section{Introduction }\label{sec-int}

We consider in this paper the development of a new class of change point test statistics that are useful in addressing the problem of retrospectively detecting change points in parameters of interest that might occur near the beginning or end of a sequence of observations. Such end-of-sample change points are usually difficult to detect with traditional procedures, like those based on measuring the fluctuations of the cumulative sum (CUSUM) process, since they rely on the existence of a proportional number of observations before or after the change point in order to maintain power. For an up to date survey on change point analysis in the context of time series data, we refer the reader to Aue and Horv\'ath (2013).

Several authors have thus considered the problem of improving the power of change point tests in the case when the change might lie near the end points of the sample. The resulting methods typically resort to weighting the maximally selected CUSUM process or Wald F-type statistics, and this is sometimes combined with trimming the domain on which such processes are maximized. In the context of a single change in the mean of scalar time series, Andrews (1993) considers the maximum of the standardized CUSUM process on a trimmed domain, which corresponds to the maximally selected likelihood ratio test with independent and identically distributed normal observations. The standardized CUSUM process can also be maximized over all possible change points without trimming, and in this case a Darling--Erd\H{o}s result is needed in order to establish the null asymptotics of the statistic. The details and history of these results are chronicled in  Cs\"org\H{o} and Horv\'ath (1993), and Cs\"org\H{o} and Horv\'ath (1997). Darling--Erd\H{o}s type statistics are considered in Ling (2007) and Hidalgo and Seo (2013) in order to test for changes in the model parameters in general time series models with weakly dependent errors. In related work, Andrews (2003) develops end-of-sample instability tests assuming that the location of the potential change point in the parameters is known a priori, and that the number of observations before or after the change point is presumed to be fixed in the asymptotics. Some of these results and practical considerations for end-of-sample change point detection with known and unknown potential change points are surveyed in Busetti (2015).

We propose here a new class of change point test statistics that utilize more heavily weighted and trimmed CUSUM processes than have been considered previously. The inspiration for this weighting/trimming scheme comes from some results of R\'enyi related to the uniform empirical process, which were further studied by  Cs\"org\H{o}; see R\'enyi (1953) and Cs\"org\H{o} (1965). For this reason, we refer to the proposed statistics as ``R\'enyi" statistics. We establish the null asymptotics of these R\'enyi statistics below, which utilize some recent results on the rate of convergence in the Gaussian approximation of partial sum processes for general weakly dependent time series. We further show that these statistics possess superior power when compared to traditional tests based on the CUSUM process when the change point is near the end points of the sample. These results may also be generalized to test for changes in model parameters in linear and non-linear regression using the least squares principle and generalized method of moment estimation. In a Monte Carlo simulation study, we observed that the proposed statistic outperformed traditional CUSUM based statistics in this setting. We also demonstrate the proposed method in an application to test for change points in the parameters of the Fama-French five factor model. In this case we observed that the proposed statistic is more efficient in retrospectively detecting evident change points when they are near the end points of the sample.

The paper is organized as follows. In Section \ref{sec-main}, the R\'enyi statistic is introduced and we develop its asymptotic properties. This statistic is extended to test for changes in linear and nonlinear regression in Section \ref{ext}. An asymptotic comparison between the proposed test and standard CUSUM based test statistics is given in Section \ref{sec-alt}. We present the results of a Monte Carlo simulation study in which we compare the proposed statistic to several competitors for end of sample changes in Section \ref{simu}, and then present a data application in Section \ref{appl}. Proofs of all results are given in an online supplement to this article Horv\'ath et al. (2018+). The code and data used to reproduce the simulation study and data analysis are available at {\tt{ https://github.com/ntguardian/CPAT}}.

\section{R\'enyi change point statistics, and their null asymptotics }\label{sec-main}

At first, we restrict our attention to the simple location model for scalar random variables
 \begin{align}\label{model-1}
 X_t=\mu_t+e_t,\quad 1\leq t \leq T,
 \end{align}
where $Ee_i=0$, and further consider the hypothesis test of
$
H_0: \mu_1=\mu_2=\ldots =\mu_T,
$
versus the at most one change in the mean alternative,
$
H_A: \mu_1=\mu_2=\ldots =\mu_{t^*}\neq \mu_{t^*+1}=\ldots =\mu_T$ with some $ 1<t^*<T,
$
where $t^*$ is unknown. Multivariate generalizations of these results are presented in Section B of the supplement to this article, Horv\'ath et al (2018+), but here we focus on the univariate case in order to simplify the presentation. Typical test statistics for this hypothesis testing problem are based on functionals of the CUSUM process. For example,

$$
A_T=\frac{1}{T^{1/2}}\max_{1\leq t \leq T} \left|\sum_{s=1}^tX_s-\frac{t}{T}\sum_{s=1}^TX_s\right|,
$$

which denotes the maximally selected CUSUM process, can be applied. In order to increase the power of $A_T$ when the change point $t^*$ might be close to 1 or  $T$, weighted CUSUM statistics may be used. Let $0\leq \tau<1/2$, and define the weighted version of $A_T$:
$$
A_T(\tau)=\frac{1}{T^{1/2}}\max_{1\leq t <T}\left(\frac{t}{T}\left(\frac{T-t}{T}\right)\right)^{-\tau}\left|\sum_{s=1}^tX_s-\frac{t}{T}\sum_{s=1}^TX_s\right|
$$
($A_T=A_T(0)$); we refer to Cs\"org\H{o} and Horv\'ath (1993) and Cs\"org\H{o} and Horv\'ath (1997) for a comprehensive study of such statistics. The maximally selected standardized CUSUM $A_T(1/2)$ has received special attention in the literature due to its connection with maximally selected likelihood ratio tests assuming Gaussian observations.
The statistic $A_T(1/2)$ appears in Andrews (1993), where it is shown that even when the errors in model \eqref{model-1} are independent and identically distributed, $A_T(1/2)\to \infty$ in probability under both $H_0$ and $H_A$.

Andrews (1993) introduced the statistic
$$
\bar{A}_T(1/2, t_T)=T^{-1/2}\max_{t_T\leq t \leq T-t_T}\left(\frac{t}{T}\left(\frac{T-t}{T}\right)\right)^{-1/2}\left|\sum_{s=1}^t X_s-\frac{t}{T}\sum_{s=1}^TX_s\right|,
$$

to overcome this problem, which depends on the choice of a trimming parameter $t_T$, usually taken to satisfy,
\beq\label{and-1}
t_T=\lf T\bar{\theta}\rf\quad \mbox{with some}\;\;0<\bar{\theta}<1/2.
\eeq
Under \eqref{and-1}, the convergence in distribution of $\bar{A}_T(1/2, t_T)$ follows immediately from the weak convergence of $T^{-1/2}\sum_{s=1}^{\lf Tu\rf}e_s, 0\leq u \leq 1$ when $H_0$ holds. Although $A_T(1/2)$ diverges in probability, it does have a limit distribution to an extreme value law after a suitable normalization using the results of Darling and Erd\H{o}s (1956); see Cs\"org\H{o} and Horv\'ath (1993). We note that if $t_T/T\to 0$, as $T\to \infty$, then $\bar{A}_T(1/2, t_T)$ obeys a Darling--Erd\H{o}s law.

A common feature of all of the above statistics is that they may be written as the maximal weighted difference between the sample means of the first $t$ and the last $T-t$ observations. To illustrate, simple calculation shows that
$$
\frac{1}{T^{1/2}}\left(\sum_{s=1}^tX_s-\frac{t}{T}\sum_{s=1}^TX_s\right) = \frac{t(T-t)}{T^{3/2}} \left( \frac{1}{t} \sum_{s=1}^{t} X_s - \frac{1}{T-t} \sum_{s=t+1 }^{T}X_s \right),
$$
and so the CUSUM used to define $A_T$ is the difference between the sample means of the first $t$ and the last $T-t$ observations that is down-weighted at the end points by a factor of $T^{-1/2}$. In order that change points at the beginning or end of the sample might be more apparent, one might instead consider directly the maximal difference between the sample means before and after each potential change point $t$. The statistic based on maximizing this difference over all $t$ will evidently not have a limiting distribution without down-weighting or trimming the difference near the endpoints.

We consider here the properties of
$$
D_T=D_T(t_T)=\max_{t_T\leq t \leq T-t_T}\left| \frac{1}{t}\sum_{s=1}^tX_s-\frac{1}{T-t}\sum_{s=t+1}^TX_s\right|,
$$

which we refer to as a R\'enyi statistic. The limit distribution of $D_T$ will evidently depend on the choice of the trimming parameter $t_T$. For example, if \eqref{and-1} holds, then $t_T^{1/2}D_T$ would converge in distribution to the maximum of a Gaussian process on a trimmed domain, and yield a similar test as in Andrews (1993). To increase the power of $D_T$ when a change in the mean might be near the endpoints of the sample, we assume instead that


\begin{assumption}\label{no-and}
$t_T\to\infty\;\;\mbox{\rm{and}}\;\; t_T/T\to 0,\;\;\mbox{as}\;\;T\to \infty.$
\end{assumption}
In order to establish the limit distribution of $t_T^{1/2}D_T$ under Assumption \ref{no-and}, we require a rate in the weak convergence of the partial sum process of the $e_s$'s, which we quantify with the following assumption.
\begin{assumption}\label{appr}  {\rm There are independent Wiener processes $\{W_{T,1}(x), 0\leq x \leq T/2\}$ and $\{W_{T,2}(x), 0\leq x\leq T/2\}$ such that
\beq\label{n-ap-1}
\max_{1\leq x\leq T/2}x^{-\kappa}\left|\sum_{s=1}^{\lf x \rf}e_s-\sigma W_{T,1}(x)\right|=O_P(1)
\eeq
and
\beq\label{n-ap-2}
\max_{T/2\leq x\leq T-1}(T-x)^{-\kappa}\left|\sum_{s=\lf x \rf +1}^{T}e_s-\sigma W_{T,2}(T-x)\right|=O_P(1)
\eeq
with some $\sigma>0$ and $0<\kappa<1/2$. }
\end{assumption}

The constant $\sigma^2$ can be interpreted as the long run variance of the sequence $e_s, -\infty<s<\infty$. Assumption \ref{appr} has been established for several broad classes of stationary sequences. In the case when $e_s, -\infty<s<\infty$ is a stationary and strongly mixing sequence, Assumption \ref{appr} was shown to hold in Kuelbs and Philipp (1980) (see also Bradley (2007) and Davis et al.\ (1995)). For stationary Bernoulli shift sequences under mild weak dependence conditions, which covers most popular stationary time series models, including the ARMA and GARCH sequences, this result was shown in Ling (2007) and Aue et al.\ (2014), and in this case Berkes et al.\ (2014) provide optimal rates in \eqref{n-ap-1}. Hall and Heyde (1980) obtains \eqref{n-ap-1} under various conditions for martingale difference sequences.

The limit distribution of $t_T^{1/2}D_T$ will be expressed in terms of the random variable
\beq\label{xidef}
\xi=\sup_{0\leq u \leq 1}|W(u)|,
\eeq
where $\{W(u), 0\leq u \leq 1\}$ denotes a Wiener process.

\begin{theorem}\label{main-1} If $H_0$ and Assumptions  \ref{no-and} and \ref{appr} hold, then, as $T\to \infty$, we have
$$
t_T^{1/2}\frac{D_T}{\sigma}\;\;\stackrel{{\mathcal D}}{\to}\;\;\max(\xi_1, \xi_2),
$$
where $\xi_1, \xi_2$ are independent and each have the same distribution as $\xi$ defined in \eqref{xidef}.
\end{theorem}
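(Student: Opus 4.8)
The plan is to work under $H_0$, where $X_s=\mu+e_s$ and the common mean cancels in each difference of sample averages, so that
$$
D_T=\max_{t_T\le t\le T-t_T}\left|\frac{1}{t}\sum_{s=1}^t e_s-\frac{1}{T-t}\sum_{s=t+1}^T e_s\right|.
$$
First I would split the maximization domain at the midpoint, writing $t_T^{1/2}D_T/\sigma=\max(L_T,R_T)$, where $L_T$ is the scaled maximum over $t_T\le t\le T/2$ and $R_T$ is the one over $T/2\le t\le T-t_T$. The guiding idea is that on the left half the backward average $\frac{1}{T-t}\sum_{s=t+1}^T e_s$ is asymptotically irrelevant, while on the right half the forward average is, so that each half collapses to a one-sided maximum built from a single partial sum process.

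Second, I would make this negligibility quantitative. On $\{t\le T/2\}$ one has $T-t\ge T/2$, and by Assumption \ref{appr} every partial sum of the $e_s$ over a subinterval of $[1,T]$ is $O_P(T^{1/2})$ uniformly; hence $\sup_{t_T\le t\le T/2}\frac{t_T^{1/2}}{T-t}|\sum_{s=t+1}^T e_s|=O_P((t_T/T)^{1/2})=o_P(1)$ by Assumption \ref{no-and}. Using $\big||a-b|-|a|\big|\le|b|$ inside the maximum, this gives $L_T=\max_{t_T\le t\le T/2}\frac{t_T^{1/2}}{\sigma t}|\sum_{s=1}^t e_s|+o_P(1)$. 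Third, I would invoke the Gaussian approximation \eqref{n-ap-1} to replace $\frac{1}{\sigma}\sum_{s=1}^t e_s$ by $W_{T,1}(t)$. The resulting error is bounded by $\sup_{t\ge t_T}\frac{t_T^{1/2}}{t}\,t^{\kappa}\,O_P(1)=O_P(t_T^{\kappa-1/2})$, which is $o_P(1)$ precisely because $0<\kappa<1/2$ and $t_T\to\infty$, the supremum over $t\ge t_T$ of $t^{\kappa-1}$ being attained at $t=t_T$ since $\kappa-1<0$. Thus $L_T=\max_{t_T\le t\le T/2}\frac{t_T^{1/2}}{t}|W_{T,1}(t)|+o_P(1)$.

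The crux is then the distribution of this Wiener-process maximum, for which I would use the self-similarity and time inversion of Brownian motion. Reparametrizing $t=t_T/u$ with $u$ ranging over $[2t_T/T,1]$, and using $\{W_{T,1}(t_T r)\}\stackrel{d}{=}\{t_T^{1/2}W(r)\}$, gives $\frac{t_T^{1/2}}{t}W_{T,1}(t)\stackrel{d}{=}u\,W(1/u)$; since $\{u\,W(1/u)\}$ is again a standard Wiener process $B$, we get $L_T\stackrel{d}{=}\sup_{2t_T/T\le u\le 1}|B(u)|$. As $t_T/T\to 0$ and $B$ is continuous with $B(0)=0$, this supremum increases a.s.\ to $\xi_1=\sup_{0\le u\le1}|B(u)|$, so $L_T\stackrel{\mathcal D}{\to}\xi_1$.

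The identical argument on the right half, using \eqref{n-ap-2} and $W_{T,2}$, yields $R_T\stackrel{\mathcal D}{\to}\xi_2$. Because $L_T$ depends (up to $o_P(1)$) only on $W_{T,1}$ and $R_T$ only on $W_{T,2}$, and these processes are independent, the pair $(L_T,R_T)$ converges jointly to independent limits $(\xi_1,\xi_2)$, whence $t_T^{1/2}D_T/\sigma=\max(L_T,R_T)\stackrel{\mathcal D}{\to}\max(\xi_1,\xi_2)$. I expect the main obstacle to be making the remainder bounds of the second and third steps uniform over the entire half-domains simultaneously with the reparametrization of the final step: the distributional identity via time inversion is clean, but care is needed to confirm that the vanishing lower endpoint $2t_T/T$ contributes nothing in the limit.
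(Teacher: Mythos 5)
Your proposal is correct and follows essentially the same route as the paper's own proof: the same split of the maximum at $T/2$, the same $O_P((t_T/T)^{1/2})$ bound showing the opposite-side average is negligible on each half, the same $O_P(t_T^{\kappa-1/2})$ error from the Gaussian approximation, and the same reduction via Brownian scaling/time inversion to $\sup_{0\le u\le 1}|W(u)|$ with independence inherited from that of $W_{T,1}$ and $W_{T,2}$. The only cosmetic difference is that the paper phrases the last step as $\sup_{1\le y\le T/(2t_T)}|W(y)/y|\to\sup_{1\le y<\infty}|W(y)/y|$ a.s.\ and then identifies the latter in law with $\xi$, whereas you invert time first; these are the same argument.
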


The proof of Theorem \ref{main-1} is provided in the online supplement to this article,  Horv\'ath et al. (2018+). This result suggests an asymptotic $\alpha$ sized test of $H_0$: given a consistent estimate of $\sigma^2$, $\hat{\sigma}^2$, examples of which we develop below, we reject if $t_T^{1/2}{D_T}/{\hat{\sigma}}$ exceeds the $1-\alpha$ quantile of  $\max(\xi_1, \xi_2)$. Below, we compare this test based on $D_T$ to other typical change point statistics, but before doing so we develop R\'enyi type statistics in more general change point problems formulated for regression models. We note that one may also obtain similar results with $D_T$ defined with asymmetric trimming, i.e. to define

$$
D_T^*=\max_{t_T\leq t \leq T-s_T}\left| \frac{1}{t}\sum_{s=1}^tX_s-\frac{1}{T-t}\sum_{s=t+1}^TX_s\right|,
$$
with $s_T$ also satisfying \ref{and-1}. We develop this case in the supplement to this paper, Horv\'ath et al. (2018+).

\section{Extension to change point tests in regression models}\label{ext}

 We consider in this section the details of applying R\'enyi statistics in two important cases: simple linear regression and nonlinear regression using both the least squares principle and generalized method of moments.

\subsection{Linear regression} Consider the simple linear regression
$$
X_t=\bx_t^\T\bbe_t+e_t,\;\;1\leq t \leq T,
$$
where $\bx_t\in \mathbb{R}^d$ and $\bbe_t\in \mathbb{R}^d$. We wish to test the null hypothesis
$$
H_0^{(1)}:\;\;\bbe_1=\bbe_2=\ldots =\bbe_T
$$
against the alternative
$$
H_A^{(1)}:\;\;\mbox{there is}\;1<t^*<T\;\mbox{such that}\;\bbe_1=\ldots =\bbe_{t^*}\neq \bbe_{t^*+1}=\ldots =\bbe_T.
$$
\begin{assumption} \label{ass-xe}
(i) The sequences $\{ \bx_t \}$ and $\{ e_t \}$ are independent.\\
(ii) The sequence  $\{ \bx_t \}$ is stationary, ergodic,  and $|\mbox{cov}(\bx_0(i), \bx_t(i))|\leq c(t+1)^{-\zeta}$ with some $c>0$ and $\zeta>2$, where $\bx_j(k)$ denotes the $k$th coordinate of $\bx_j$.\\
 (iii) The sequence $\{ e_t\}$ satisfies $Ee_0=0$ and $|Ee_te_s|\leq c(|t-s|+1)^{-\zeta}$ with some $c>0$ and $\zeta>2$.
\end{assumption}

We use the  least squares estimator $\hat\bbe_T=(\bZ_T^\top \bZ_T)^{-1}\bZ_T^\top \bX_T$, where
$$
\bZ_T =[\bx_1,...,\bx_T]^\top \in \mathbb{R}^{T\times d},
$$
and $\bX_T=(X_1, X_2,\ldots, X_T)^\T$. It follows from the ergodic theorem that
$$
\frac{1}{T}\bZ_T^\top \bZ_T \to\bC_z\;\;\mbox{a.s.}
$$

\begin{assumption}\label{ass-0}\;$\bC_z$ is a nonsingular matrix.
\end{assumption}
The residuals are defined by
\beq\label{res-lin}
\hat{e}_t=X_t-\bx_t^\T\hat{\bbe}_T, \;\;\;1\leq t \leq T.
\eeq
\begin{theorem}\label{main-lin} If $H_0^{(1)}$,  Assumptions \ref{no-and}--\ref{appr}, \ref{ass-xe} and  \ref{ass-0} hold, then, as $T\to \infty$, we have
$$
\frac{t_T^{1/2}}{\sigma}\max_{t_T\leq t \leq T- t_T}\left|\frac{1}{t}\sum_{s=1}^t\hat{e}_s-\frac{1}{T-t}\sum_{s=t+1}^T\hat{e}_s    \right|\;\;\stackrel{{\mathcal D}}{\to}\;\;\max(\xi_1, \xi_2),
$$
where  $\xi_1, \xi_2$ are independent and they have the same distribution as $\xi$ of \eqref{xidef}.
\end{theorem}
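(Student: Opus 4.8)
The plan is to reduce Theorem \ref{main-lin} to the already-established Theorem \ref{main-1}. The key observation is that the statistic in Theorem \ref{main-lin} is built from residuals $\hat{e}_s = X_s - \bx_s^\T\hat{\bbe}_T$, whereas Theorem \ref{main-1} governs the analogous statistic built from the true errors $e_s$. Under $H_0^{(1)}$ we have $X_s = \bx_s^\T\bbe + e_s$ for a common $\bbe$, so
\beq\label{res-decomp}
\hat{e}_s = e_s - \bx_s^\T(\hat{\bbe}_T - \bbe).
\eeq
Substituting \eqref{res-decomp} into the R\'enyi functional, the term in $e_s$ reproduces exactly the statistic of Theorem \ref{main-1}, which converges to $\max(\xi_1,\xi_2)$ provided the $e_s$ satisfy Assumption \ref{appr} (this is where I would invoke Assumption \ref{ass-xe}(iii), noting that summable autocovariances of the stated polynomial rate place $\{e_s\}$ in the scope of the strong-approximation results cited after Assumption \ref{appr}). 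It therefore suffices to show that the contribution of the estimation error $\hat{\bbe}_T - \bbe$ is asymptotically negligible after multiplication by $t_T^{1/2}$.

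Concretely, I would first establish the standard rate $\hat{\bbe}_T - \bbe = O_P(T^{-1/2})$. Under Assumption \ref{ass-0} the matrix $T^{-1}\bZ_T^\T\bZ_T \to \bC_z$ is invertible in the limit, so $\hat{\bbe}_T - \bbe = (\bZ_T^\T\bZ_T)^{-1}\bZ_T^\T\be$, where $\be = (e_1,\ldots,e_T)^\T$; by the independence of $\{\bx_t\}$ and $\{e_t\}$ in Assumption \ref{ass-xe}(i) together with the variance bounds in \ref{ass-xe}(ii)--(iii), one gets $T^{-1/2}\bZ_T^\T\be = O_P(1)$ and hence $\hat{\bbe}_T - \bbe = O_P(T^{-1/2})$. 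Next I would bound the perturbation term uniformly over the trimmed range. Writing $\Delta_T = \hat{\bbe}_T-\bbe$, the perturbation is
\beq\label{perturb}
\max_{t_T\leq t\leq T-t_T}\left|\left(\frac{1}{t}\sum_{s=1}^t\bx_s-\frac{1}{T-t}\sum_{s=t+1}^T\bx_s\right)^{\!\T}\Delta_T\right|,
\eeq
and I would show the two block averages are each $O_P(1)$ uniformly in $t$. For $t\geq t_T\to\infty$, the ergodic theorem gives $t^{-1}\sum_{s=1}^t\bx_s\to E\bx_0$ a.s., and the fluctuations around this mean are controlled uniformly by a maximal inequality exploiting the decaying covariances in \ref{ass-xe}(ii); the same holds for the right block by symmetry. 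Consequently \eqref{perturb} is $O_P(1)\cdot O_P(T^{-1/2}) = O_P(T^{-1/2})$, so after scaling by $t_T^{1/2}$ it is $O_P((t_T/T)^{1/2}) = o_P(1)$ by Assumption \ref{no-and}.

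Assembling these pieces via Slutsky's theorem yields the claimed convergence. \textbf{The main obstacle} I anticipate is obtaining the \emph{uniform} control of the block averages $t^{-1}\sum_{s=1}^t\bx_s$ over the full trimmed range $t_T\leq t\leq T-t_T$ rather than at a single fixed proportion $t/T$: the pointwise ergodic theorem alone does not deliver uniformity near the lower endpoint $t\approx t_T$, where only $\sim t_T$ observations are averaged, so the partial sums $\sum_{s=1}^t(\bx_s-E\bx_0)$ must be shown to be $o_P(t)$ uniformly. This requires a maximal-type bound that leverages the polynomial covariance decay $\zeta>2$ of Assumption \ref{ass-xe}(ii) (for instance via a moment bound on $\max_{t_T\le t}|\sum_{s=1}^t(\bx_s-E\bx_0)|$ and the Menshov--Rademacher or Hájek--Rényi inequality). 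Once that uniform estimate is in hand, the remainder of the argument is routine, since it only needs the negligibility afforded by $t_T/T\to 0$.
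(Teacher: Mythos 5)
Your proposal is correct and follows essentially the same route as the paper: decompose $\hat{e}_s=e_s-\bx_s^\T(\hat{\bbe}_T-\bbe)$, reduce the leading term to Theorem \ref{main-1}, and kill the perturbation using $\|\hat{\bbe}_T-\bbe\|=O_P(T^{-1/2})$ (which the paper simply cites from the literature) together with uniform boundedness of the block averages of $\bx_s$, yielding an $o_P((t_T/T)^{1/2})$ bound after the $t_T^{1/2}$ scaling. The uniformity issue you flag is real but handled identically in the paper (via the stationarity/covariance-decay conditions of Assumption \ref{ass-xe}), so there is no substantive divergence.
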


The statistic in Theorem \ref{main-lin} depends on the unknown long run variance $\sigma^2$ which can be estimated from the residuals $\hat{e}_t, 1\leq t \leq T$, as discussed in Section \ref{simu} below.

\subsection{Nonlinear regression}
We next consider the nonlinear regression model
$
X_t = h(\bx_t, \btheta_t) + e_t,  1 \le t \le T,
$
where the $\btheta_t$'s are $d$--dimensional parameter vectors.
 Under the null hypothesis
 $
 H_0^{(2)}:\btheta_1=\btheta_2=\ldots =\btheta_T
 $
 and the alternative is formulated as
$
H_A^{(2)}:$ there is $1<t^*<T$   such that $\btheta_1=\btheta_2=\ldots =\btheta_{t^*}\neq \btheta_{t^*+1}=\ldots =\btheta_T.
$
 The unknown
common parameter vector under $H_0^{(2)}$ is denoted by $\btheta_0$. Using the least squares principle, the estimator for $\btheta_0$ is the location of the minimizer of
\[
L_T(\btheta) = \sum_{t=1}^T (X_t - h(\bx_t, \btheta))^2,
\]
where the minimum is taken over the parameter space $\bTheta$. We make the following  mild assumptions:
\begin{assumption} \label{a:theta} The parameter space $\bTheta$ is
a compact subset of $\mathbb{R}^d$,  and $\btheta_0$ is an interior point of  $\bTheta$.
\end{assumption}

\begin{assumption} \label{a:h}
\begin{align*}
&\sup_{\btheta \in \bTheta} E h^2(\bx_0, \btheta) < \infty, \;\;\sup_{\btheta \in \bTheta} \Biggl \| \frac{\partial^2}{\partial \btheta^2}
h(\bx_t, \btheta) \Biggl \| \le M(\bx_t), \ \ \ EM(\bx_0) < \infty, \\
&E \Biggl \|\frac{\partial}{\partial \btheta} h(\bx_0, \btheta_0) \Biggl \|^2< \infty, \mbox{ and } E [ h(\bx_0, \btheta_0) - h(\bx_0, \btheta) ]^2 > 0, \ \ \
{\rm if} \ \ \btheta \neq \btheta_0.
\end{align*}

\end{assumption}

The conditions formulated in Assumption~\ref{a:h} ensure that
under $H_0$ the differences between the functionals based on the
unobservable errors $e_t$ and the residuals
\beq\label{res-2}
\tilde{e}_t = X_t - h(\bx_t, \hat\btheta)
\eeq
are asymptotically negligible in the sense that they do not affect the limit in Theorem  \ref{main-1} when $X_t$ is replaced by $\tilde{e}_t$.
\begin{theorem}\label{main-nonlin} If $H_0^{(2)}$,  Assumptions \ref{no-and}--\ref{appr}, \ref{ass-xe}, \ref{a:theta} and  \ref{a:h} hold, then, as $T\to \infty$, we have
$$
\frac{t_T^{1/2}}{\sigma}\max_{t_T\leq t \leq T- t_T}\left|\frac{1}{t}\sum_{s=1}^t\tilde{e}_s-\frac{1}{T-t}\sum_{s=t+1}^T\tilde{e}_s    \right|\;\;\stackrel{{\mathcal D}}{\to}\;\;\max(\xi_1, \xi_2),
$$
where  $\xi_1, \xi_2$ are independent and they have the same distribution as $\xi$ of \eqref{xidef}.
\end{theorem}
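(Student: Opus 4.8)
The plan is to show that, after the normalization by $t_T^{1/2}/\sigma$, replacing the unobservable errors $e_t$ by the residuals $\tilde e_t$ of \eqref{res-2} perturbs the R\'enyi functional by a term that is $o_P(1)$, so that the limiting law is inherited from Theorem \ref{main-1}. Under $H_0^{(2)}$ we have $X_s=h(\bx_s,\btheta_0)+e_s$, whence
\[
\tilde e_s=e_s+\delta_s,\qquad \delta_s:=h(\bx_s,\btheta_0)-h(\bx_s,\hat\btheta).
\]
Using $\big||a|-|b|\big|\le|a-b|$ together with the fact that the difference of two maxima is controlled by the maximum of the absolute differences, the asserted convergence follows from Theorem \ref{main-1}, applied to the mean-zero sequence $\{e_s\}$ (so that the error-based functional converges to $\max(\xi_1,\xi_2)$), combined with the single estimate
\[
R_T:=\frac{t_T^{1/2}}{\sigma}\max_{t_T\le t\le T-t_T}\left|\frac{1}{t}\sum_{s=1}^t\delta_s-\frac{1}{T-t}\sum_{s=t+1}^T\delta_s\right|=o_P(1),
\]
the conclusion then being reached by Slutsky's theorem. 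I emphasize that the unknown common mean of the $X_s$ plays no role, since it cancels in every difference of block averages; the content is entirely in bounding $R_T$.

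To estimate $R_T$ I would expand $h(\bx_s,\cdot)$ by Taylor's theorem about $\btheta_0$,
\[
\delta_s=-g_s^\T(\hat\btheta-\btheta_0)-\tfrac12(\hat\btheta-\btheta_0)^\T\frac{\partial^2}{\partial\btheta^2}h(\bx_s,\btheta_s^*)(\hat\btheta-\btheta_0),\qquad g_s:=\frac{\partial}{\partial\btheta}h(\bx_s,\btheta_0),
\]
with $\btheta_s^*$ on the segment joining $\btheta_0$ and $\hat\btheta$, and treat the two contributions separately. The first ingredient is the rate $\hat\btheta-\btheta_0=O_P(T^{-1/2})$, the standard $\sqrt T$--consistency of the nonlinear least squares estimator, which holds under Assumptions \ref{ass-xe}, \ref{a:theta} and \ref{a:h} (consistency from compactness and the identifiability condition $E[h(\bx_0,\btheta_0)-h(\bx_0,\btheta)]^2>0$, and the rate from the moment conditions on the first two derivatives and the independence of $\{\bx_t\}$ and $\{e_t\}$). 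For the linear part the common mean $Eg_0$ cancels in the difference of block averages, so the relevant quantity is the maximum over the trimmed range of
\[
\left\|\frac1t\sum_{s=1}^t\big(g_s-Eg_0\big)\right\|\quad\text{and}\quad\left\|\frac{1}{T-t}\sum_{s=t+1}^T\big(g_s-Eg_0\big)\right\|.
\]
Because $\{g_s\}$ is stationary and ergodic with $E\|g_0\|<\infty$, Birkhoff's theorem forces the forward averages (with $t\ge t_T$) and, via time reversal, the backward averages (with $T-t\ge t_T$) to vanish uniformly, so each maximum is $o_P(1)$, in particular $O_P(1)$. The linear contribution to $R_T$ is therefore at most $t_T^{1/2}\,\|\hat\btheta-\btheta_0\|\,O_P(1)=O_P\big((t_T/T)^{1/2}\big)=o_P(1)$ by Assumption \ref{no-and}.

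The quadratic remainder is handled with the uniform Hessian bound of Assumption \ref{a:h}: since $\|\partial^2 h(\bx_s,\btheta_s^*)/\partial\btheta^2\|\le M(\bx_s)$ with $EM(\bx_0)<\infty$, the corresponding difference of block averages is bounded by $\tfrac12\|\hat\btheta-\btheta_0\|^2\big(\tfrac1t\sum_{s=1}^tM(\bx_s)+\tfrac1{T-t}\sum_{s=t+1}^TM(\bx_s)\big)$, and the two averages are again $O_P(1)$ by the ergodic theorem (both block lengths exceed $t_T\to\infty$). This contributes $t_T^{1/2}\,O_P(T^{-1})\,O_P(1)=O_P\big(t_T^{1/2}/T\big)=o_P(1)$, using $t_T\le T$. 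Combining the two bounds gives $R_T=o_P(1)$, which completes the argument.

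The $\sqrt T$--consistency of the nonlinear least squares estimator is the most substantial analytic input, but it is classical. The genuinely delicate point, and the one I expect to require the most care, is that the heavy $1/t$ and $1/(T-t)$ weights force one to control the block averages \emph{uniformly} over the entire trimmed range $[t_T,T-t_T]$, including the backward blocks, rather than at a single $t$; the ergodic theorem combined with the time-reversal identity noted above is a clean route around this. What ultimately makes the perturbation negligible is Assumption \ref{no-and}: because $t_T/T\to0$, the inflation by $t_T^{1/2}$ cannot overcome the $O_P(T^{-1/2})$ estimation error, so estimating $\btheta_0$ leaves the R\'enyi limit unchanged.
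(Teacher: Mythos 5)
Your proposal is correct and follows essentially the same route as the paper: a two-term Taylor expansion of $h(\bx_s,\cdot)$ about $\btheta_0$, the rate $\|\hat\btheta_T-\btheta_0\|=O_P(T^{-1/2})$ (which the paper simply cites from G\'orecki et al.\ rather than rederiving), uniform $o_P(1)$ control of the centered block averages of the gradient via the ergodic theorem, and an $O_P(t_T^{1/2}/T)$ bound on the Hessian remainder, so that Assumption \ref{no-and} makes the whole perturbation negligible and Theorem \ref{main-1} delivers the limit. Your treatment of the backward block averages by time reversal is a point the paper leaves implicit, but it is the same argument.
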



\subsection{Generalized method of moments estimation }
Our results may also be considered in the context of generalized method of moments (GMM) estimation. The basic notation that we use here is inspired by Chapter 21 of Zivot and Wang (2006). The GMM estimator $\hat{\theta}_T$ is generally the solution of a moment equation satisfying

$$
\sum_{t=1}^{T}g({\bm x}_t, \hat{\theta}_T) = 0,
$$
where ${\bm x}_t$ contains both model and instrumental variables. Let $m_t(\theta)=g({\bm x}_t, \theta)$, and we assume that the parameter $\theta \in \Theta$, where $\Theta$ is a compact subset of $\mathbb{R}$. One could more generally consider $\theta \in \mathbb{R}^d$, $d\ge 1$, which we address in the the supplement to this paper,  Horv\'ath et al. (2018+). Model stability in this case can be described as $H_0^{(3)} \; 0=Em_1(\theta_0) = \cdots =  Em_T(\theta_0), \;\; \mbox{ for a unique } \theta_0 \in \Theta,$ and a single change point at time $t^*$ is characterized by $H_A^{(3)}\; \; 0 = Em_1(\theta) = \cdots = Em_{t^*}(\theta) \ne Em_{t^*+1}(\theta) = \cdots = Em_{T}(\theta)$ for some $\theta \in \Theta$. Under $H_0$, $\theta_0$ denotes the true value of the parameter. We require that Assumption 3.1 holds for the instrumental variables, which yields that $m_t(\theta)$ is a stationary sequence. The following assumptions are standard in GMM estimation, see for example the conditions of Theorem 3.1 of Hansen (1982).

\begin{assumption}\label{gm-as-1}
$Em_0(\theta)=0$ if and only if $\theta=\theta_0$.
\end{assumption}

\begin{assumption}\label{gm-as-2}
$\sup_{\theta \in \Theta} E|m_0(\theta)| < \infty$, $\sup_{\theta \in \Theta}  \left|\frac{\partial}{\partial \theta} m_0(\theta) \right| \le M_t$, with $EM_t < \infty$, and $E\frac{\partial}{\partial \theta} m_0(\theta)$ is different from zero in a neighborhood of $\theta_0$.
\end{assumption}

\begin{assumption}\label{gm-as-3}  {\rm There are independent Wiener processes $\{W_{T,1}(x), 0\leq x \leq T/2\}$ and $\{W_{T,2}(x), 0\leq x\leq T/2\}$ such that
\beq\label{n-ap-1}
\max_{1\leq x\leq T/2}x^{-\kappa}\left|\sum_{s=1}^{\lf x \rf}m_s(\theta_0)-\sigma W_{T,1}(x)\right|=O_P(1)
\eeq
and
\beq\label{n-ap-2}
\max_{T/2\leq x\leq T-1}(T-x)^{-\kappa}\left|\sum_{s=\lf x \rf +1}^{T}m_s(\theta_0)-\sigma W_{T,2}(T-x)\right|=O_P(1)
\eeq
with some $\sigma>0$ and $0<\kappa<1/2$. }
\end{assumption}

Ling (2007) establishes general conditions under which Assumption \ref{gm-as-3} holds.

\begin{theorem}\label{gmm-thm} Suppose that $H_0^{(3)}$, Assumptions \ref{no-and},  and \ref{gm-as-1}--\ref{gm-as-3} hold, and that ${\bm x}_t$ satisfies Assumption \ref{ass-xe}(ii). Then as $T \to \infty$, we have

$$
\frac{t_T^{1/2}}{\sigma} \max_{t_T \le t \le T- t_T} \left| \frac{1}{t} \sum_{s=1}^{t}m_s(\hat{\theta}_T) - \frac{1}{T-t} \sum_{s=t+1}^{T}m_s(\hat{\theta}_T) \right| \stackrel{D}{\to} \max\{\xi_1,\xi_2\},
$$
where $\sigma^2$ is defined in Assumption \ref{gm-as-3}, and $\xi_1$ and $\xi_2$ are independent with the same distribution as $\xi$ of \eqref{xidef}.
\end{theorem}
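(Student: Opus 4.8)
The plan is to show that the statistic built from the estimated moment functions $m_s(\hat{\theta}_T)$ is asymptotically equivalent to the same statistic built from the true parameter $\theta_0$, and then to invoke Theorem \ref{main-1}. Assumption \ref{ass-xe}(ii) makes $\{{\bm x}_t\}$ stationary and ergodic, hence so is $\{m_s(\theta_0)\}$; under $H_0^{(3)}$ and Assumption \ref{gm-as-1} this sequence has mean zero, and Assumption \ref{gm-as-3} is precisely the strong approximation Assumption \ref{appr} for it. Thus $\{m_s(\theta_0)\}$ satisfies every hypothesis imposed on $\{e_s\}$ in Theorem \ref{main-1}, and applying that theorem with $X_s$ replaced by $m_s(\theta_0)$ yields
\[ \frac{t_T^{1/2}}{\sigma}\max_{t_T \le t \le T-t_T}\left|\frac{1}{t}\sum_{s=1}^{t}m_s(\theta_0) - \frac{1}{T-t}\sum_{s=t+1}^{T}m_s(\theta_0)\right| \stackrel{D}{\to} \max\{\xi_1, \xi_2\}. \]
By Slutsky's theorem together with $\bigl|\max_t|a_t| - \max_t|b_t|\bigr| \le \max_t|a_t - b_t|$, it then suffices to show that replacing $\theta_0$ by $\hat{\theta}_T$ costs only $o_P(1)$.

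First I would record the $T^{1/2}$-consistency of $\hat{\theta}_T$. Under Assumptions \ref{gm-as-1} and \ref{gm-as-2}, standard GMM arguments (see Hansen (1982)) give $\hat{\theta}_T \stackrel{P}{\to} \theta_0$. A mean value expansion of the estimating equation $\sum_{t=1}^{T}m_t(\hat{\theta}_T)=0$ about $\theta_0$ produces
\[ \hat{\theta}_T - \theta_0 = -\left(\frac{1}{T}\sum_{t=1}^{T}\frac{\partial}{\partial\theta}m_t(\bar{\theta})\right)^{-1}\frac{1}{T}\sum_{t=1}^{T}m_t(\theta_0), \]
with $\bar{\theta}$ between $\theta_0$ and $\hat{\theta}_T$. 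The numerator is $O_P(T^{-1/2})$ by Assumption \ref{gm-as-3}, while the denominator converges to the nonzero constant $E\frac{\partial}{\partial\theta}m_0(\theta_0)$ by consistency, the ergodic theorem, and Assumption \ref{gm-as-2}. Hence $\hat{\theta}_T - \theta_0 = O_P(T^{-1/2})$, so that $t_T^{1/2}|\hat{\theta}_T - \theta_0| = (t_T/T)^{1/2}O_P(1) = o_P(1)$ by Assumption \ref{no-and}.

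The main step is the uniform bound on the replacement error. A further mean value expansion gives $m_s(\hat{\theta}_T) - m_s(\theta_0) = \frac{\partial}{\partial\theta}m_s(\bar{\theta}_s)(\hat{\theta}_T - \theta_0)$ for intermediate points $\bar{\theta}_s$, so the difference of the two partial averages factors as $(A_t - B_t)(\hat{\theta}_T - \theta_0)$, where $A_t = t^{-1}\sum_{s=1}^{t}\frac{\partial}{\partial\theta}m_s(\bar{\theta}_s)$ and $B_t = (T-t)^{-1}\sum_{s=t+1}^{T}\frac{\partial}{\partial\theta}m_s(\bar{\theta}_s)$. The domination bound in Assumption \ref{gm-as-2} controls $|A_t|$ and $|B_t|$ by Cesàro means of $M_s$, which converge to $EM_0<\infty$; since $t_T\to\infty$ forces every averaging block over $t_T \le t \le T-t_T$ to have length at least $t_T$, these means are uniformly $O_P(1)$, whence $\max_{t_T \le t \le T-t_T}|A_t - B_t| = O_P(1)$. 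Multiplying by $t_T^{1/2}|\hat{\theta}_T - \theta_0| = o_P(1)$ from the previous step gives
\[ \frac{t_T^{1/2}}{\sigma}\max_{t_T \le t \le T-t_T}\left|\frac{1}{t}\sum_{s=1}^{t}\bigl(m_s(\hat{\theta}_T) - m_s(\theta_0)\bigr) - \frac{1}{T-t}\sum_{s=t+1}^{T}\bigl(m_s(\hat{\theta}_T) - m_s(\theta_0)\bigr)\right| = o_P(1), \]
which is exactly the negligibility needed in the first paragraph.

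The step requiring the most care is this uniform control of $A_t$ and $B_t$ across the whole trimmed range: one must ensure the Cesàro averages of $M_s$ stay simultaneously bounded for every admissible split point, which is where $t_T\to\infty$ is indispensable, as it rules out averaging over short blocks. The pleasant simplification is that one need not show $A_t$ and $B_t$ converge individually or cancel — boundedness of their difference suffices, because the factor $(t_T/T)^{1/2}\to 0$ supplied by Assumption \ref{no-and} already drives the entire replacement error to zero.
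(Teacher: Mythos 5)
Your proposal is correct and follows essentially the same route as the paper's proof: reduce to Theorem \ref{main-1} applied to $m_s(\theta_0)$, establish $|\hat{\theta}_T-\theta_0|=O_P(T^{-1/2})$ via a mean value expansion of the estimating equation (with the numerator controlled by Assumption \ref{gm-as-3} and the denominator by Assumption \ref{gm-as-2} and ergodicity), and then kill the replacement error using the mean value theorem, the domination of the derivative by $M_t$ with uniformly $O_P(1)$ Ces\`aro averages over the trimmed range, and the factor $(t_T/T)^{1/2}\to 0$. No substantive differences from the paper's argument.
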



\section{A comparison of the R\'enyi and maximally selected CUSUM statistics for end--of--sample changes}\label{sec-alt}

The primary advantage of the R\'enyi statistic over traditional test statistics is in its ability to detect change points that occur near the end points of the sample. We now study this advantage in detail by comparing the asymptotic properties of $D_T$ under the alternative of an end-of-sample change point to those of the maximally selected CUSUM statistic $A_T$. A similar analysis could be conducted for the other CUSUM based statistics described above. If $H_0$ and Assumption \ref{appr} hold, then it is well known that
\beq\label{cusnul}
\frac{1}{\sigma}A_T\;\;\stackrel{{\mathcal D}}{\to}\;\;\sup_{0\leq t \leq 1}|B(t)|,
\eeq
where $B(t),\; 0\le t \le 1$ denotes a standard Brownian bridge (see  Aue and Horv\'ath (2013)). Let $\Delta=\mu_{t^*+1}-\mu_{t^*}$ denote the size of change. In what follows, we allow both $\Delta$ and $t^*$ to depend on $T$. The necessary and sufficient conditions for the asymptotic consistency of both $A_T$ and $D_T$ are as follows:

\begin{condition}\label{ha1}
$\quad
{T^{-3/2}}{t^*(T-t^*)}|\Delta|\to\infty.
$
\end{condition}
\begin{condition}\label{ha-2}\quad
$
\min(t^*, T-t^*,t_T)t_T^{-1/2}|\Delta|\to \infty.
$
\end{condition}

\begin{theorem}\label{altha-1} (i) If Assumption \ref{appr} is satisfied, then we have that
\beq\label{alt-1}
A_T\stackrel{P}{\to}\infty.
\eeq
if and only if Condition \ref{ha1} holds. \\
(ii) If Assumptions \ref{no-and} and \ref{appr}  are  satisfied, then we have that
\beq\label{alt-2}
t_T^{1/2}D_T\stackrel{P}{\to}\infty
\eeq
if and only if Condition \ref{ha-2} holds.
\end{theorem}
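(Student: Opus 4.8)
The plan is to analyze the two statistics separately, in each case decomposing the relevant process into a deterministic "signal" term coming from the mean shift $\Delta$ and a stochastic "noise" term governed by the error partial sums, which are controlled by Assumption \ref{appr}. Under $H_A$ we may write $X_s = \mu_s + e_s$ with $\mu_s = \mu_{t^*}$ for $s \le t^*$ and $\mu_s = \mu_{t^*} + \Delta$ for $s > t^*$. First, for part (i), I would substitute this decomposition into the CUSUM and compute the deterministic contribution explicitly. A direct calculation shows that evaluating the CUSUM at the true change point $t = t^*$ produces a mean term of exact order $T^{-3/2} t^*(T-t^*)|\Delta|$, since
$$
\frac{1}{T^{1/2}}\left(\sum_{s=1}^{t^*}\mu_s - \frac{t^*}{T}\sum_{s=1}^T \mu_s\right) = \frac{t^*(T-t^*)}{T^{3/2}}|\Delta|,
$$
while the error part of $A_T$ is $O_P(1)$ by \eqref{cusnul} (or more precisely, stays stochastically bounded uniformly in $t$ via the Gaussian approximation). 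This immediately gives the "if" direction: when Condition \ref{ha1} holds, the signal at $t^*$ diverges and dominates the bounded noise, so $A_T \stackrel{P}{\to}\infty$. For the "only if" direction, I would argue that the deterministic signal is maximized (up to constants) near $t^*$, so the whole maximum is of order $T^{-3/2}t^*(T-t^*)|\Delta| + O_P(1)$; if Condition \ref{ha1} fails, the signal stays bounded and $A_T$ cannot diverge.

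For part (ii), the same mean/noise decomposition is applied to the difference of sample means defining $D_T$. The deterministic part of $\frac{1}{t}\sum_{s=1}^t \mu_s - \frac{1}{T-t}\sum_{s=t+1}^T \mu_s$ behaves qualitatively differently depending on whether the evaluation point $t$ lies before or after $t^*$, and the key is to evaluate near the endpoints of the trimmed region and near $t^*$ itself. The stochastic part, scaled by $t_T^{1/2}$, is $O_P(1)$ by Theorem \ref{main-1}. The most informative evaluation point is the one that makes the deterministic gap between the two sample means as large as possible while the prefactor $t_T^{1/2}$ is applied. Evaluating at a point of order $t_T$ on the appropriate side of $t^*$, the deterministic signal in $t_T^{1/2}D_T$ is seen to be of order $\min(t^*, T-t^*, t_T)\, t_T^{-1/2}|\Delta|$, which is precisely the quantity in Condition \ref{ha-2}: the $\min$ arises because the amount of "contaminated" mass available to separate the two sample means is capped both by the distance of the change from the nearer endpoint and by the trimming level $t_T$. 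When Condition \ref{ha-2} holds this signal diverges and swamps the bounded noise, giving \eqref{alt-2}; conversely, if it fails, one checks that the deterministic signal is uniformly bounded over the trimmed range $t_T \le t \le T - t_T$, so $t_T^{1/2}D_T$ stays stochastically bounded.

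The main obstacle, and the part requiring genuine care rather than routine computation, is establishing the sharp "only if" directions and verifying that the location of the signal-maximizing $t$ yields exactly the stated rates. For $D_T$ in particular, one must check all three regimes — a change near the left endpoint ($t^* \lesssim t_T$), near the right endpoint, and in the bulk — and confirm that in each regime the worst-case deterministic separation over the admissible $t$ matches $\min(t^*, T-t^*, t_T) t_T^{-1/2}|\Delta|$, including the delicate interplay when $t^*$ itself falls outside the trimmed interval. This case analysis of where the maximum is attained, together with a uniform-in-$t$ bound showing the stochastic remainder is $O_P(1)$ across the entire trimmed range (so that boundedness of the signal genuinely implies boundedness of $t_T^{1/2}D_T$), is where the real work lies; the divergence ("if") directions follow comparatively easily once the signal at a single well-chosen $t$ is computed.
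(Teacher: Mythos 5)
Your proposal matches the paper's proof in all essentials: the paper writes the CUSUM and the difference-of-means process as (noise from the $e_s$) minus a deterministic drift ($z_{t,T}$, resp.\ $v_{t,T}$), bounds the statistic above and below by $\max_t|\mbox{drift}|\pm\max_t|\mbox{noise}|$, controls the noise uniformly via \eqref{cusnul} and Theorem \ref{main-1}, and observes that $\max_t|z_{t,T}|$ and $\max_{t_T\le t\le T-t_T}|v_{t,T}|$ are exactly of the orders appearing in Conditions \ref{ha1} and \ref{ha-2} (with the maximizer at $t^*$ when it lies in the trimmed range and at the nearer trimming boundary otherwise, which is your three-regime case analysis). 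The proposal is correct and takes essentially the same route.
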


\begin{rem}\label{rem-2} {\rm Consider the case  when $|\Delta|$ is constant. Writing $t^*=\lf T\theta_T\rf$, an early change point can be characterized by the assumption that $\theta_T\to 0$ (a late change, $\theta_T\to 1$, can be investigated similarly). In this case,  Condition \ref{ha1} reduces to $T^{1/2}\theta_T\to \infty$. This clearly does not hold if $\theta_T=O(T^{-1/2})$.  However, choosing $t_T$ satisfying  $t_T=O(T^{1/2})$, Condition \ref{ha-2} will be satisfied, and such a change is asymptotically detected by $D_T$. }
\end{rem}

\begin{rem}\label{rem-1} {\rm According to calculations arising in the proof of Theorem \ref{altha-1}, one has that
$$
A_T\approx {T^{-1/2}}{t^*}|\Delta|\quad\mbox{and}\quad  t_T^{1/2}D_T \approx t_T^{1/2}|\Delta|,\;\;\mbox{if}\;\;t_T\leq t^*\leq T-t_T.
$$

Therefore, the power of $D_T$ asymptotically exceeds that of $A_T$ if $T^{1/2}t_T^{1/2}/t^*\to \infty$. }
\end{rem}

Next we provide more detailed information on the local power functions of $A_T$ and $D_T$ in the case of an early change.

\begin{theorem}\label{alt-dist} We assume that $t^*/T\to 0$ as $T\to \infty$, i.e. the change is early.\\
 (i)  If $\{e_s, -\infty<s<\infty\}$ is a stationary sequence satisfying Assumption \ref{appr},
\beq\label{petrov}
E\left| \sum_{s=u}^ve_s\right|^{\bar{\nu}}\leq C(v-u+1)^{\bar{\nu}/2}\quad\mbox{with some}\;\;\bar{\nu}>2\;\;\mbox{and}\;\;C\;\;\mbox{for all}\;\;1\leq u\leq v\leq T,
\eeq
and
$
|\Delta|(t^*)^{3/2}/T\to \infty,
$
then we have that
$$
\left(\frac{T}{t^*}\right)^{1/2}\left( A_T- T^{-3/2}t^*(T-t^*)|\Delta| \right) \;\;\stackrel{{\mathcal D}}{\to}\;\;\sigma{\mathcal N},
$$
where ${\mathcal N}$ denotes a standard normal random variable. \\
(ii) If Assumptions \ref{no-and} and \ref{appr}, $t^*/t_T\to \infty$, and
$
{t_T^{1/2}t^*}|\Delta|/{T}\to 0
$
hold, then we have that
$$
t_T^{1/2}(D_T-|\Delta|)\;\;\stackrel{{\mathcal D}}{\to}\;\;\sigma\sup_{0\le u \le 1}W(u),
$$
where $W(u)$ is a standard Wiener process.
\end{theorem}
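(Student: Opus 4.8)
The plan is to treat both parts by first reducing the statistics computed from the errors $e_s$ to their Gaussian approximations via Assumption~\ref{appr}, and then analyzing the resulting Wiener-process functionals under the stated alternative.

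For part (i), I would begin by writing out the CUSUM quantity at each candidate index $t$ as a signal-plus-noise decomposition. Under the early-change alternative, the mean $\mu_t$ jumps by $\Delta$ at $t=t^*$, so for the CUSUM evaluated at the \emph{true} change point $t^*$ one computes the deterministic ``signal'' contribution to be exactly $T^{-3/2}t^*(T-t^*)|\Delta|$, matching the centering term in the statement. The plan is then to show that the maximum over all $t$ of the weighted CUSUM is, up to the relevant order, attained near $t^*$, so that $A_T$ is well approximated by its value at $t^*$. The randomness comes from the partial sums of $e_s$; using \eqref{n-ap-1}--\eqref{n-ap-2} I would replace $\sum_{s=1}^{t}e_s-(t/T)\sum_{s=1}^{T}e_s$ by the corresponding Brownian-bridge-type functional of the $W_{T,i}$'s, with error controlled by the $x^{\kappa}$ and $(T-x)^{\kappa}$ rates. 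The condition $|\Delta|(t^*)^{3/2}/T\to\infty$ is precisely what guarantees the signal dominates the fluctuations of the maximum away from $t^*$ with probability tending to one, so that after recentering and rescaling by $(T/t^*)^{1/2}$ the limiting fluctuation is the Gaussian increment of the Wiener process evaluated at the scale $t^*$, which is asymptotically $\sigma\mathcal{N}$. The moment bound \eqref{petrov} is needed here to uniformly control the supremum of the noise over the range of $t$ that are candidate maximizers, via a maximal/Doob-type inequality, ensuring that no spurious index far from $t^*$ produces a competing maximum.

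For part (ii), the R\'enyi statistic $D_T$ is the maximal difference of sample means before and after $t$. At $t=t^*$ the deterministic part of this difference is exactly $|\Delta|$, which gives the centering. The strategy is to localize the maximizer: because the sample-mean difference is largest near the true change and the trimming Assumption~\ref{no-and} together with $t^*/t_T\to\infty$ places $t^*$ well inside the admissible range, I would show the maximum over $t_T\le t\le T-t_T$ is achieved near the left trimming boundary, i.e. near $t=t_T$, where the first-segment average $t^{-1}\sum_{s=1}^{t}e_s$ has the largest variance. Replacing the first-segment partial sums by $\sigma W_{T,1}$ via \eqref{n-ap-1}, the relevant object is $\sup_{t_T\le t\le t^*}t^{-1}\sigma W_{T,1}(t)$ plus the signal $|\Delta|$, while the second-segment average is of smaller order and asymptotically negligible once multiplied by $t_T^{1/2}$; the side condition ${t_T^{1/2}t^*}|\Delta|/T\to0$ is what kills the cross-terms between the signal and the tail of the averaging window. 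After scaling by $t_T^{1/2}$, a Brownian scaling argument ($W_{T,1}(t_T u)\overset{d}{=}t_T^{1/2}W(u)$) converts $\sup t_T^{1/2}t^{-1}W_{T,1}(t)$ into $\sup_{0\le u\le1}u^{-1}W(t_Tu)\cdot$(normalization), and a change of variables reduces this to $\sup_{0\le u\le1}W(u)$, yielding the stated limit $\sigma\sup_{0\le u\le1}W(u)$.

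\textbf{The main obstacle} I anticipate is, in both parts, the uniform localization of the argmax: one must rule out that some index $t$ far from the change point (in part (i)) or far from the trimming boundary (in part (ii)) wins the maximum due to a large noise excursion. This is where the rate $\kappa<1/2$ in Assumption~\ref{appr} and the moment bound \eqref{petrov} do the real work, and where the delicate interplay between the growth condition on $|\Delta|t^*$-type quantities and the trimming rate $t_T$ must be managed carefully to ensure the signal dominates uniformly while the surviving noise term converges to the correct nondegenerate Wiener functional rather than to an extremal (Darling--Erd\H{o}s) law.
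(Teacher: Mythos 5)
Your proposal follows essentially the same route as the paper's proof: a signal-plus-noise decomposition, localization of the maximizer near $t^*$ via maximal inequalities derived from \eqref{petrov} followed by a CLT for $\sum_{s=1}^{t^*}e_s-\frac{t^*}{T}\sum_{s=1}^{T}e_s$ in part (i), and in part (ii) reduction of the supremum to the block $t_T\le t\le t^*$ (with the drift deviation from $|\Delta|$ made uniformly negligible by $t_T^{1/2}t^*|\Delta|/T\to 0$ and the post-change block shown to be $o_P(t_T^{-1/2})$), concluding by Brownian scaling and time inversion to obtain $\sigma\sup_{0\le u\le 1}W(u)$. The approach and the roles you assign to each hypothesis match the paper's argument.
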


Remarks \ref{rem-2}, \ref{rem-1}, and Theorem \ref{alt-dist} suggest that $t_T$ should be chosen to be fairly small in order to detect early changes in contrast to the standard CUSUM based procedures. In the simulations and applications below we take $t_T = \lfloor \log T \rfloor$, and also consider several other potential choices of $t_T$.

\subsection{Consistency of R\'enyi statistics in regression}

We briefly discuss the consistency of our procedure based on the R\'enyi statistic constructed from the residuals $\hat{e}_t$ and $\tilde{e}_t$. The consistency of tests developed in the context of nonlinear regression can be discussed along similar lines as those based on Theorem \ref{main-lin}, and so we focus just on this latter case.

On account of Assumption \ref{ass-xe}(iii), the ergodic theorem implies that
$$
\lim_{t\to\infty}\frac{1}{t}\sum_{s=1}^t\bx_s=\bar{\bx}_0\quad\mbox{a.s.}
$$
with some $\bar{\bx}_0\in \mathbb{R}^d$. Let $\bbe_{(1)}$ and $\bbe_{(2)}$ denote the regressors before and after the change point $t^*$, respectively. We allow that both $\bbe_{(1)}$ and $\bbe_{(2)}$ might depend on $T$ and $\|\bbe_{(1)}-\bbe_{(2)}\|\to 0$ is included under the alternative. We need some minor conditions on the size and the location of the change:
\begin{assumption}\label{ass-lin-alt}\;\;\;(i)\;$t_T\leq t^*\leq T-t_T$.\;
(ii)\;\;\;$t_T^{1/2}|\bar{\bx}_0^\T(\bbe_{(1)}-\bbe_{(2)})|\to \infty.$
\end{assumption}

\begin{theorem}\label{main-lin-alt} If $H_A^{(1)}$,  Assumptions \ref{no-and}--\ref{appr}, and \ref{ass-xe}-\ref{ass-lin-alt} hold, then, as $T\to \infty$, we have
$$
{t_T^{1/2}}\max_{t_T\leq t \leq T- t_T}\left|\frac{1}{t}\sum_{s=1}^t\hat{e}_s-\frac{1}{T-t}\sum_{s=t+1}^T\hat{e}_s    \right|\;\;\stackrel{P}{\to}\;\;\infty.
$$

\end{theorem}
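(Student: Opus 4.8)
The plan is to bound the maximum from below by its value at the true change point $t=t^*$, which is admissible since Assumption~\ref{ass-lin-alt}(i) guarantees $t_T\le t^*\le T-t_T$, and then to show that this single term, after scaling by $t_T^{1/2}$, already diverges. Writing $\hat e_s=e_s+\bx_s^\T(\bbe_s-\hat{\bbe}_T)$ and splitting the two sample averages at $t^*$ according to whether $\bbe_s=\bbe_{(1)}$ or $\bbe_s=\bbe_{(2)}$, I would express
\[
\frac{1}{t^*}\sum_{s=1}^{t^*}\hat e_s-\frac{1}{T-t^*}\sum_{s=t^*+1}^T\hat e_s
=\bar{\bx}_{(1)}^\T(\bbe_{(1)}-\hat{\bbe}_T)-\bar{\bx}_{(2)}^\T(\bbe_{(2)}-\hat{\bbe}_T)+\big(\bar e_{(1)}-\bar e_{(2)}\big),
\]
where $\bar{\bx}_{(1)},\bar{\bx}_{(2)}$ and $\bar e_{(1)},\bar e_{(2)}$ denote the sample means of the regressors and of the errors before and after $t^*$.

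The key observation is that $\hat{\bbe}_T$ cancels at leading order: replacing $\bar{\bx}_{(1)}$ and $\bar{\bx}_{(2)}$ by their common ergodic limit $\bar{\bx}_0$ leaves exactly $\bar{\bx}_0^\T(\bbe_{(1)}-\bbe_{(2)})$, the signal appearing in Assumption~\ref{ass-lin-alt}(ii). I would therefore write the first two terms as $\bar{\bx}_0^\T(\bbe_{(1)}-\bbe_{(2)})$ plus the remainder $(\bar{\bx}_{(1)}-\bar{\bx}_0)^\T(\bbe_{(1)}-\hat{\bbe}_T)-(\bar{\bx}_{(2)}-\bar{\bx}_0)^\T(\bbe_{(2)}-\hat{\bbe}_T)$. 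Controlling the remainder needs two ingredients. First, a rate for the regressor means, which the summable covariances in Assumption~\ref{ass-xe}(ii) ($\zeta>2$) supply in the form $\bar{\bx}_{(1)}-\bar{\bx}_0=O_P((t^*)^{-1/2})$ and $\bar{\bx}_{(2)}-\bar{\bx}_0=O_P((T-t^*)^{-1/2})$. Second, the boundedness of $\bbe_{(j)}-\hat{\bbe}_T$, for which I would use
\[
\hat{\bbe}_T-\bbe_{(2)}=(\bZ_T^\T\bZ_T)^{-1}\Big(\sum_{s=1}^{t^*}\bx_s\bx_s^\T\Big)(\bbe_{(1)}-\bbe_{(2)})+(\bZ_T^\T\bZ_T)^{-1}\bZ_T^\T(e_1,\dots,e_T)^\T,
\]
where the matrix factor in the first summand is $O_P(1)$ by the ergodic theorem and Assumption~\ref{ass-0}, while the second summand is $O_P(T^{-1/2})$ because $\{\bx_t\}$ and $\{e_t\}$ are independent with summable covariances; hence $\bbe_{(j)}-\hat{\bbe}_T=O_P(1)$ when the change is bounded.

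Combining these, the remainder is $O_P((t^*)^{-1/2})+O_P((T-t^*)^{-1/2})$, and by Assumption~\ref{appr} the noise gap $\bar e_{(1)}-\bar e_{(2)}$ is of the same order; since $t^*\ge t_T$ and $T-t^*\ge t_T$, multiplying by $t_T^{1/2}$ turns every remainder into $O_P(1)$. Consequently
\[
t_T^{1/2}\left|\frac{1}{t^*}\sum_{s=1}^{t^*}\hat e_s-\frac{1}{T-t^*}\sum_{s=t^*+1}^T\hat e_s\right|\ge t_T^{1/2}\big|\bar{\bx}_0^\T(\bbe_{(1)}-\bbe_{(2)})\big|-O_P(1),
\]
and the right-hand side diverges to infinity in probability by Assumption~\ref{ass-lin-alt}(ii). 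As the left-hand side is dominated by the $t_T^{1/2}$-scaled maximum, the theorem follows.

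The main obstacle is the argument of the second paragraph: showing that the estimation error $\hat{\bbe}_T-\bbe_{(j)}$ does not corrupt the signal. Unlike the location model, $\hat{\bbe}_T$ does not converge to either the pre- or post-change parameter but to a mixture $\tfrac{t^*}{T}\bbe_{(1)}+(1-\tfrac{t^*}{T})\bbe_{(2)}$, so one cannot simply substitute a limit; the saving feature is the exact cancellation of $\bar{\bx}_0^\T\hat{\bbe}_T$ in the difference of the two averages. Making this rigorous requires pairing the $O_P((t^*)^{-1/2})$ fluctuation of the regressor means against the bounded estimation error so that the product is of strictly smaller order than the $t_T^{1/2}$-scaled signal, and it is precisely here that the decay rate $\zeta>2$ in Assumption~\ref{ass-xe}(ii) and the trimming lower bound $t^*\ge t_T$ from Assumption~\ref{ass-lin-alt}(i) enter.
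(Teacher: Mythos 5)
Your proposal follows essentially the same route as the paper's proof: lower-bound the maximum at $t=t^*$ (admissible by Assumption \ref{ass-lin-alt}(i)), decompose the residual averages into the error-only gap plus the regressor terms, note that the error gap is $O_P(t_T^{-1/2})$, and exploit the exact cancellation of $\hat{\bbe}_T$ upon replacing the sample means of the regressors by the ergodic limit $\bar{\bx}_0$, so that the surviving signal $t_T^{1/2}|\bar{\bx}_0^\T(\bbe_{(1)}-\bbe_{(2)})|$ diverges by Assumption \ref{ass-lin-alt}(ii). The paper organizes the control of the estimation error slightly differently (via the explicit expansion $\hat{\bbe}_T=\tfrac{t^*}{T}\bbe_{(1)}+\tfrac{T-t^*}{T}\bbe_{(2)}+O_P(\sqrt{t^*}/T+\sqrt{T-t^*}/T)$ and a multiplicative $1+o_P(1)$ factor), but the substance is the same.
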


\section{Implementation details and a simulation study}\label{simu}

Below we provide specifications for practically implementing the proposed tests, and study these in a simulation study. The code and data used to reproduce the simulation study and data analysis below are available at {\tt{ https://github.com/ntguardian/CPAT}}.

\subsection{Estimating the long run variance}\label{var-sec} Normalizing $D_T$ so that it has a pivotal limit distribution requires the estimation of $\sigma^2$. Therefore in practice we use the statistics
 $$
 \hat{{\mathcal G}}_{T}=t_T^{1/2}\max_{t_T\leq t\leq T-t_T}\frac{1}{\hat{\sigma}_{T,t}}\left|\frac{1}{t}\sum_{s=1}^tX_s-\frac{1}{T-t}\sum_{s=t+1}^TX_s\right|,
 $$
where $\hat{\sigma}^2_{T,t}$ are estimators for $\sigma^2$ assuming the observations might contain a mean change in order to preserve monotonic power; see Perron and Vogelsang (1992) and Vogelsang (1999).  So long as the sequence of estimators $\hat{\sigma}_{T,t}$ satisfies
\beq\label{si-1}
\max_{t_T\leq t \leq T-t_T}\left|\hat{\sigma}_{T,t}-\sigma\right|=o_P(1)\quad \mbox{under}\;\;H_0
\eeq
and
\beq\label{si-2}
\left|\hat{\sigma}_{T,t^*}-\sigma\right|=o_P(1)\quad \mbox{under}\;\;H_A,
\eeq
it follows from Theorem \ref{main-1} that under $H_0$ and the assumptions of Theorem \ref{main-1},
$$
\hat{{\mathcal G}}_{T}\;\stackrel{{\mathcal D}}{\to}\;\max(\xi_1, \xi_2),
$$
where $\xi_1$ and $\xi_2$ are independent and  distributed as $\xi$ of \eqref{xidef}. Therefore, we aim to construct estimators $\hat{\sigma}_{T,t}$ that satisfy \eqref{si-1} and \eqref{si-2}. Also, under the conditions of Theorem \ref{altha-1}(b) we have under the alternative that
$$
\hat{{\mathcal G}}_{T}\;\stackrel{P}{\to}\;\infty.
$$
In case of  observations that are presumed to be uncorrelated, we use the estimators
\beq\label{simudef-1}
\hat{\sigma}^2_{T,t}=\frac{1}{T}\left(\sum_{s=1}^t(X_s-\bar{X}_t)^2+\sum_{s=t+1}^T(X_s-\tilde{X}_{T-t})^2\right),
\eeq
where
$$
\bar{X}_t=\frac{1}{t}\sum_{s=1}^tX_s\quad\mbox{and}\quad \tilde{X}_{T-t}=\frac{1}{T-t}\sum_{s=t+1}^TX_s.
$$
\begin{theorem}\label{si-th-1} We assume that $\{e_t, -\infty<t<\infty\}$ is a stationary and ergodic sequence with $Ee_0=0$ and $Ee_0^2=\sigma^2$. If Assumption \ref{appr} holds, then
the estimator $\hat{\sigma}^2_{T,t}$ defined by \eqref{simudef-1} satisfies \eqref{si-1} and \eqref{si-2}.
\end{theorem}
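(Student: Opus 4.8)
The plan is to expand the estimator \eqref{simudef-1} under each hypothesis, reduce it to the ergodic average $T^{-1}\sum_{s=1}^T e_s^2$ plus two explicit ``correction'' terms built from centered partial sums, and then bound those corrections uniformly using the Gaussian approximation of Assumption \ref{appr}. Under $H_0$ the observations share a common mean $\mu$, so $X_s-\bar X_t=e_s-\bar e_t$ on the first block and $X_s-\tilde X_{T-t}=e_s-\tilde e_{T-t}$ on the second, where $\bar e_t=t^{-1}\sum_{s=1}^t e_s$ and $\tilde e_{T-t}=(T-t)^{-1}\sum_{s=t+1}^T e_s$. Using $\sum_{s=1}^t(e_s-\bar e_t)^2=\sum_{s=1}^t e_s^2-t\bar e_t^2$ and the analogous identity on the second block gives the key decomposition
$$
\hat\sigma^2_{T,t}=\frac{1}{T}\sum_{s=1}^T e_s^2-\frac{1}{Tt}\Bigl(\sum_{s=1}^t e_s\Bigr)^2-\frac{1}{T(T-t)}\Bigl(\sum_{s=t+1}^T e_s\Bigr)^2.
$$
The leading term converges a.s.\ to $\sigma^2$ by the ergodic theorem applied to the stationary ergodic sequence $\{e_s^2\}$, and this limit is free of $t$, so \eqref{si-1} reduces to showing that the two correction terms are $o_P(1)$ uniformly over $t_T\le t\le T-t_T$.

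I would control the first correction term by splitting the range at $T/2$. On $[t_T,T/2]$, Assumption \ref{appr} gives $\max_{1\le x\le T/2}\bigl|\sum_{s=1}^{\lfloor x\rfloor}e_s\bigr|=O_P(T^{1/2})$, since the Wiener part is $O_P(T^{1/2})$ by Brownian scaling and the remainder is $O_P((T/2)^\kappa)=o_P(T^{1/2})$ because $\kappa<1/2$; bounding $(Tt)^{-1}\le(Tt_T)^{-1}$ then yields $O_P(1/t_T)=o_P(1)$ as $t_T\to\infty$. On $[T/2,T-t_T]$ I would instead use $(Tt)^{-1}\le 2T^{-2}$ together with the same $O_P(T^{1/2})$ bound for $\sum_{s=1}^t e_s$ (obtained by writing $\sum_{s=1}^t e_s=\sum_{s=1}^T e_s-\sum_{s=t+1}^T e_s$ and invoking \eqref{n-ap-2}), giving $O_P(1/T)=o_P(1)$. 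The second correction term is handled identically after the reflection $t\mapsto T-t$, now using \eqref{n-ap-2} on $[T/2,T-t_T]$ and the crude $T^{-2}$ bound on $[t_T,T/2]$. This establishes \eqref{si-1}.

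For \eqref{si-2} the decisive observation is that splitting exactly at the true change point $t=t^*$ again cancels the mean shift on each block: for $s\le t^*$ one has $X_s-\bar X_{t^*}=e_s-\bar e_{t^*}$ and for $s>t^*$ one has $X_s-\tilde X_{T-t^*}=e_s-\tilde e_{T-t^*}$, so the decomposition above holds verbatim at $t=t^*$. Evaluated at the single index $t^*$, each correction term is $T^{-1}$ times a normalized squared partial sum of the form $(\text{partial sum})^2/(\text{length})=O_P(1)$, so both corrections are $o_P(1)$ and $\hat\sigma^2_{T,t^*}\to\sigma^2$ in probability. Finally I would pass from $\hat\sigma^2$ to $\hat\sigma$ through $|\hat\sigma_{T,t}-\sigma|=|\hat\sigma^2_{T,t}-\sigma^2|/(\hat\sigma_{T,t}+\sigma)$, using that $\hat\sigma^2_{T,t}$ is bounded away from $0$ with probability tending to one because its uniform limit is $\sigma^2>0$; this converts the $o_P(1)$ statements for $\hat\sigma^2$ into the claimed statements \eqref{si-1} and \eqref{si-2} for $\hat\sigma$.

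The main obstacle is the uniform-in-$t$ control of the first correction term over the full trimmed range, specifically the portion $t>T/2$ where the partial-sum approximation \eqref{n-ap-1} does not apply directly; the resolution is to rewrite $\sum_{s=1}^t e_s$ via the complementary sum and exploit the strong $T^{-2}$ damping available there, while the more delicate estimate on $[t_T,T/2]$ is saved precisely by the trimming $t_T\to\infty$ of Assumption \ref{no-and}.
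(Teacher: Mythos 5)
Your proof is correct and starts from exactly the decomposition the paper uses, namely $T\hat{\sigma}^2_{T,t}=\sum_{s=1}^{T}e_s^2-t\bar{e}_t^2-(T-t)\tilde{e}_{T-t}^2$, with the ergodic theorem applied to $\{e_s^2\}$ handling the leading term. Where you diverge is in the uniform control of the two correction terms: the paper invokes the Darling--Erd\H{o}s law to get $\max_{1\le t\le T}t\bar{e}_t^2=O_P(\log\log T)$ and $\max_{1\le t<T}(T-t)\tilde{e}_{T-t}^2=O_P(\log\log T)$, so that after division by $T$ the corrections are $o_P(1)$ uniformly over the whole range $1\le t\le T$; you instead split at $T/2$ and use only the crude bound $\max_{1\le x\le T/2}\bigl|\sum_{s=1}^{\lfloor x\rfloor}e_s\bigr|=O_P(T^{1/2})$ from Assumption \ref{appr} plus Brownian scaling, which yields $O_P(1/t_T)$ on the near half of the trimmed range and $O_P(1/T)$ on the far half. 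Your route is more elementary (no extreme-value bound for the standardized partial sums is needed) but gives a slightly weaker conclusion: it is $o_P(1)$ only because $t_T\to\infty$, i.e.\ you must import Assumption \ref{no-and}, whereas the paper's bound is uniform over all of $1\le t\le T$ and makes no use of the trimming. Since Assumption \ref{no-and} is in force everywhere the estimator is deployed in the paper, this costs nothing in context. Your treatment of \eqref{si-2} --- the decomposition holds verbatim at $t=t^*$ because the mean shift cancels blockwise, after which a single pointwise $O_P(1)$ bound on each normalized squared partial sum suffices --- is the same observation the paper makes, and your explicit passage from $\hat{\sigma}^2_{T,t}$ to $\hat{\sigma}_{T,t}$ via $|\hat{\sigma}_{T,t}-\sigma|\le|\hat{\sigma}^2_{T,t}-\sigma^2|/\sigma$ is a detail the paper leaves implicit.
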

When the model errors are presumed to be serially correlated, we use a kernel-smoothed estimate of the spectral density at frequency zero to estimate $\sigma^2$; see, for example, Priestley (1981). The standard assumptions on the kernel $K$ and the window (smoothing parameter) $h$ will be used here:

\begin{assumption}\label{ass-k} $K\geq 0,\;\;K(0)=1,\;\;K(u)=0,\;\mbox{if}\;\;|u|>c\;\;\mbox{with some  }c>0,$ and
$K(u)$ is Lipshitz continuous on the real line.
\end{assumption}

\begin{assumption}\label{ass-h}\;\;
$
h=h(T)\to \infty$  and  $({h}/{T})(\log T)^{1/\bar{\nu}}(\log\log T)^{1/2}\to 0,
$
where $\bar{\nu}$ is given  in \eqref{petrov}.
\end{assumption}

For any $1\leq s \leq T$  and $1\leq t <T$ we define
\begin{displaymath}
X_{s,t}=\left\{
\begin{array}{ll}
X_s-\bar{X}_t,\;\;&\mbox{if}\;\;1\leq s \leq t,\vspace{.3cm}\\
X_s-\tilde{X}_{T-t},\;\;&\mbox{if}\;\;t<s\leq T.
\end{array}
\right.
\end{displaymath}
For every fix $t$ we define  the long run variance estimator       
\begin{align}\label{simudef-2}
\hat{\sigma}^2_{T,t}=\hat{\gamma}_{0,t}+2\sum_{\ell=1}^{T-1}K\left(\frac{\ell}{h}\right)\hat{\gamma}_{\ell,t}\quad\mbox{with}\quad
\hat{\gamma}_{\ell,t}=\frac{1}{T-\ell}\sum_{s=1}^{T-\ell}X_{s,t}X_{s+\ell,t}.
\end{align}

Our estimator accounts for a possible change in the mean at an unknown time. The classical estimator for $\sigma^2$ is similarly defined, one needs to replace $\bar{X}_t$ and $\tilde{X}_{T-t}$ with $\bar{X}_T$ in definitions  of the empirical covariance of lag $\ell$. If $\{e_t, -\infty<t<\infty\}$ satisfies a Bernoulli shift assumption as considered in Aue et al. (2014), then $\hat{\sigma}^2_{T,t}$ defined by \eqref{simudef-2} satisfies \eqref{si-1} and \eqref{si-2}.

\subsection{Finite sample properties of Theorem \ref{main-1}}

We now present the results of a simulation study that aimed to investigate how the limit result in Theorem \ref{main-1} manifests under $H_0$ in finite samples for several different data generating processes (DGP's). All simulations were performed using the R statistical computing language. 
We considered two different DGP's under $H_0$: 1) $e_t$ are iid normal random variables, 
2) $e_t$ are iid taking values $+1$ and $-1$ with  probability $1/2$.  
For each of these DGP's, we generated 100,000 simulated samples of lengths $T=50,250$ and $1000$, and calculated $t_T^{1/2}{D_T}/{\sigma}$. We note that we did not estimate $\sigma^2$ in this first study, but rather used the known value for each DGP. We considered three choices of $t_T$: $t_T=\lfloor \log T \rfloor$, $t_T = \lfloor T^{1/4} \rfloor $, and $t_T = \lfloor T^{1/2} \rfloor$. We only report the results for $t_T=\lfloor \log T \rfloor$ and $t_T = \lfloor T^{1/2} \rfloor$, taking $t_T=\lfloor T^{1/4} \rfloor$ performed similarly. Density estimates based on the standard normal kernel with bandwidths computed from the data were compared to the density of the limit in Theorem \ref{main-1}.  The distribution of $\xi$ can be found using Monte Carlo simulation, or using the formulas in Cs\"org\H{o} and R\'ev\'esz (1981, page 43). We employ the latter method below.

\begin{figure}
        \centering
        \caption{Plots of the density of the limit (dash) and density estimates of the the R\'enyi statistics (solid) with sample size $T=250$,  $t_T=\log T$,  $e_t$ is standard normal (upper left), $e_t$ is Bernoulli (upper right) and $t_T=\lfloor T^{1/2}\rfloor$, $e_t$ is standard normal (lower left), $e_t$ is Bernoulli (lower right) }\label{fig-0}
\begin{minipage}{.5\textwidth}
            \centering
            \includegraphics[width=.989\linewidth]{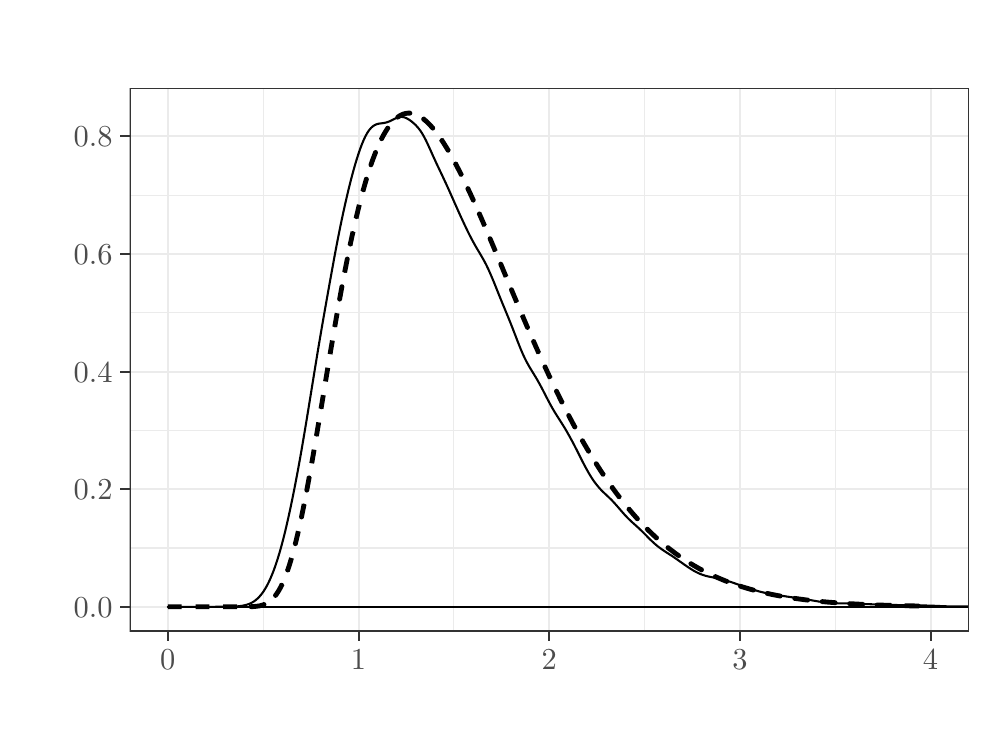}
        \end{minipage}%
        \begin{minipage}{.5\textwidth}
            \centering
            \includegraphics[width=.989\linewidth]{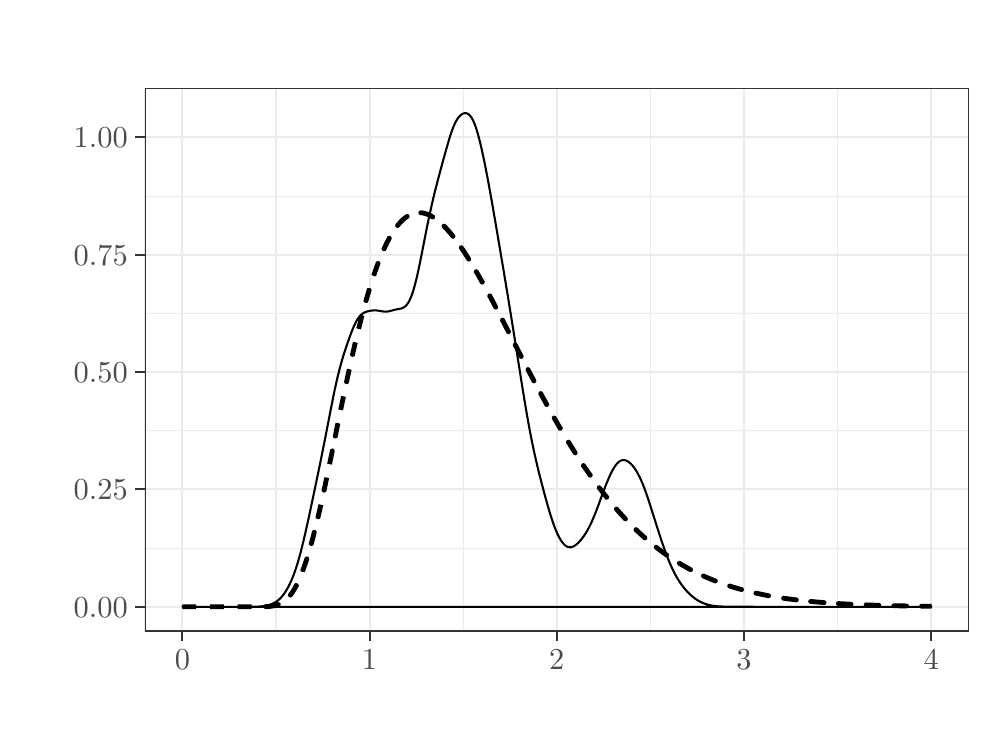}
        \end{minipage}

        \begin{minipage}{.5\textwidth}
            \centering
            \includegraphics[width=.989\linewidth]{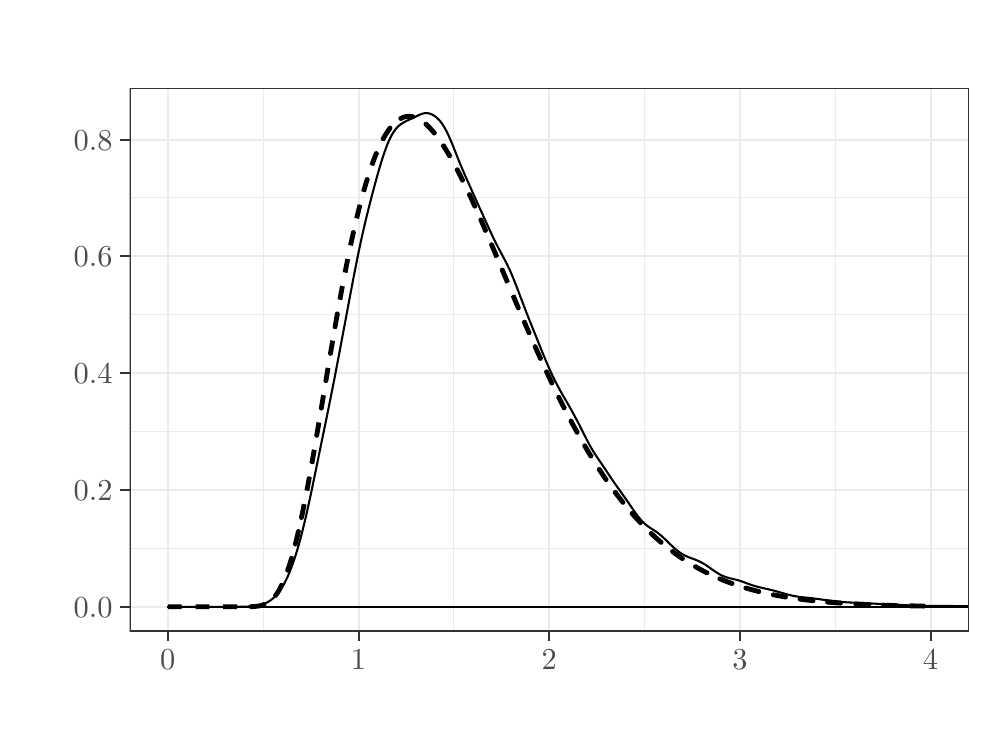}
        \end{minipage}%
        \begin{minipage}{.5\textwidth}
            \centering
            \includegraphics[width=.989\linewidth]{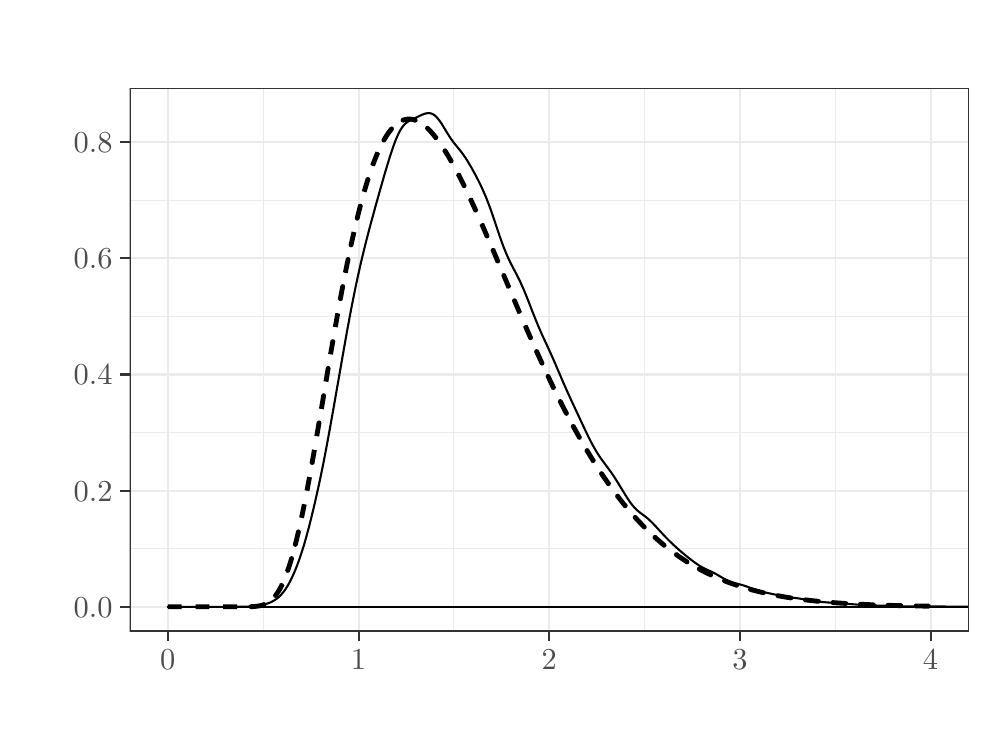}
        \end{minipage}
\end{figure}

Figure \ref{fig-0}  shows 
that the  most relevant factor determining the speed of convergence in Theorem \ref{main-1} for each DGP was the choice of $t_T$. Large choices $(t_T = \lfloor T^{1/2} \rfloor$) performed the best. But, even in the case when $t_T=\log T$, the convergence is quite fast, and importantly the difference between the sample and theoretical quantiles at the 0.9, 0.95, and 0.99 levels was small for each value of $T$ and DGP considered. \\

\subsection{Comparison of change point tests for end--of--sample changes}

We further compared the R\'enyi statistic $D_T(\lfloor \log T \rfloor)$ in the case of changes occurring early in the sample to the standard CUSUM statistic $A_T$,  the Darling-Erd\H{o}s statistic
$$E_T = (2 \log \log ({T}/({\log T})^{{3}/{2}}))^{1/2}A_T(1/2) - M_T,$$
where
$
M_T=  2 \log \log ({T}/({\log T })^{{3}/{2}})$  $ -({1}/{2})\log\log\log (T/({\log T})^{{3}/{2}}) + ({1}/{2})\log \pi,
$
and to the Lagrange multiplier statistic $\mathcal{LM}_t$ of equation (8) of Hidalgo and Seo (2013). The limiting distributions of $A_T$ and $E_T$  are reviewed  in Aue and Horv\'ath (2013). For the sake of brevity and in order to ease comparisons, we did not include the statistics of Andrews (1993) and Ling (2007), due to their relative similarity in performance to the Darling-Erd\H{o}s statistic.

Comparing to the previous subsection, we in these simulations replace the theoretical $\sigma$ with the corresponding long-run variance estimators defined in \eqref{simudef-2} with the  Bartlett kernel and bandwidth defined in Andrews (1991). For each statistic, we generated sequences $e_t$ according to  \eqref{model-1} under $H_A$ where the error sequence $e_t$ satisfied either 1) the $e_t$'s are iid standard normal, 2) $e_t$ follows the GARCH(1,1) model
$  e_t = \sigma_t w_t$ and
 $ \sigma_t^2 = 0.5 + 0.1 e_{t-1}^2 + 0.7 \sigma^2_{t-1},$ 
where $w_t$ are iid standard normal random variables, 3) the $e_t$'s follow an AR(1) model with parameter $\rho=0.5$, $e_t= 0.5 e_{t-1}+w_t$, or 4) $e_t's$ follow and ARMA(2,2) model, $e_t=0.4e_{t-1}-0.03e_{t-2}+w_t+0.5w_{t-1}-0.6w_{t-2}$. We consider the ARMA(2,2) process in order to evaluate the performance of the endogenous bandwidth selection of Andrews (1991) under model misidentification. We allowed $\Delta = \mu_T - \mu_1$ to range between $-2$ and $2$ at increments of $0.1$, and considered values of $t^* = \lfloor T^{1/4} \rfloor$ and $t^* = \lfloor 0.05T \rfloor$, which each model an early change point. We considered sample sizes of $T=50,200$ and $500$. For each simulated $X_t$ sequence parameterized by $\Delta$ and $t^*$, we calculated the statistics $D_T$, $E_T$ and $A_T$ suitably normalized by the variance estimators detailed in Section \ref{var-sec}. The greatest contrast is seen for $t^* = \lfloor T^{1/4} \rfloor$, and we present the results for only this case in order to conserve space.

For each value of $\Delta$, we calculated the percentage of test statistics that exceeded the $\alpha=0.05$ critical value of their limiting distributions. The results are reported in Figures \ref{fig-pow} and \ref{fig-small-n}. With regards to the R\'enyi statistic we report results when the trimming parameter is $t_T = \lfloor \log T \rfloor$. Results for other choices of trimming parameter for $D_T$ were similar to the logarithmic case, with trimming parameters closer to the location of the change point (for a fixed $T$) tending to perform better. We summarize these results as follows:\\
(i) In the presence of moderate serial correlation and with a small sample size, the statistics utilizing normalization by the long run variance estimator tended to be oversized, as seen in Figure \ref{fig-small-n}. This did improve with increasing sample size. (ii) We observed that the test based on $D_T(\lfloor \log T \rfloor)$ held its size well, even when using the kernel based long run variance estimator. This is especially true relative to the other CUSUM based tests, which typically had inflated size. (iii) When the change was very early, i.e. when $t^* = \lfloor T^{1/4} \rfloor$, the statistic $D_T(\lfloor \log T \rfloor)$ exhibited higher power than either the standard CUSUM, Darling-Erd\H{o}s, or Hidalgo and Seo (2013) statistics. (iv) The standard CUSUM based statistic $A_T$ displayed low empirical power for such end--of--sample changes.  Since the proportion of the observations before the change was decreasing  to zero, the power of the CUSUM and the Darling--Erd\H{o}s tests decreased or did not change as the sample size increased. The power of the R\'enyi statistic was still increasing with the sample size confirming the conclusion of Remark \ref{rem-1}. (v) Among the competing statistics, the R\'enyi statistic generally showed the best performance for end of sample change detection, followed closely by the statistic of Hidalgo and Seo (2013).

\begin{figure}
	\centering
	\mbox{		\subfigure{\includegraphics[width=2.6in]{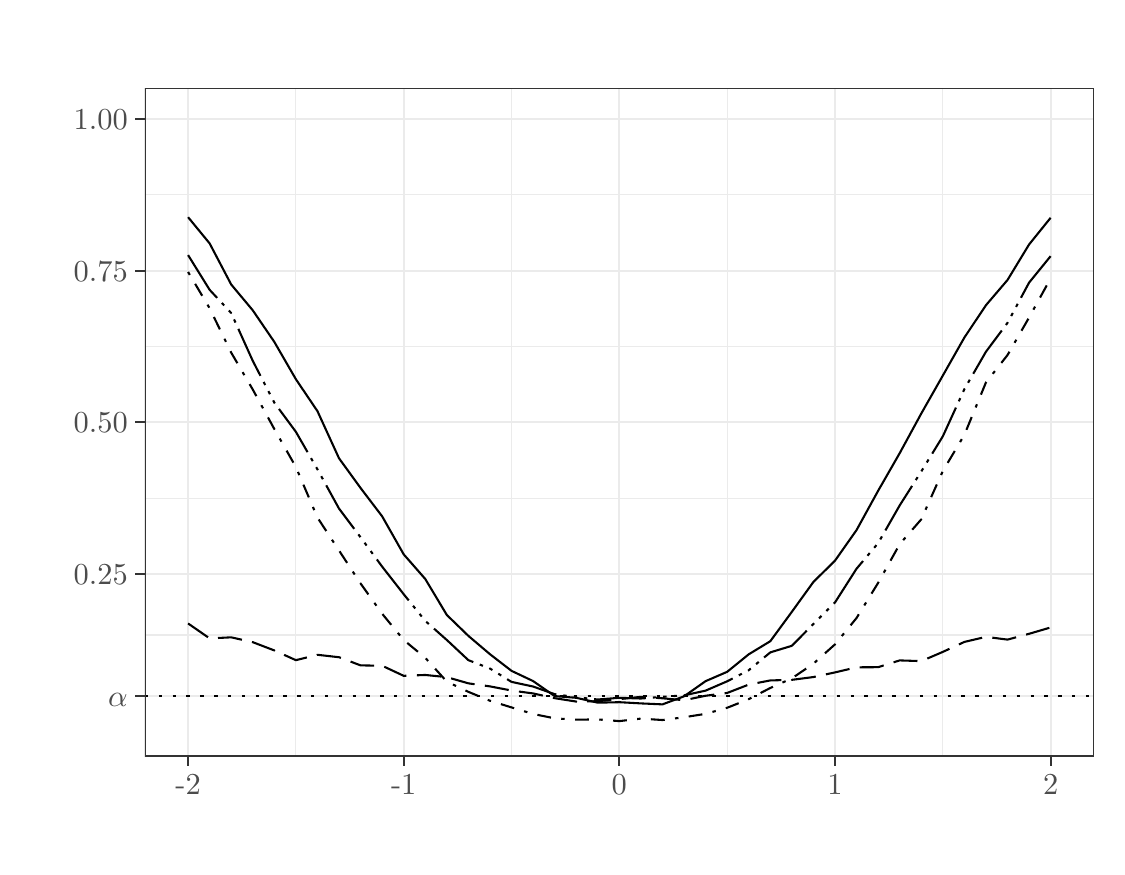} } \subfigure{\includegraphics[width=2.6in]{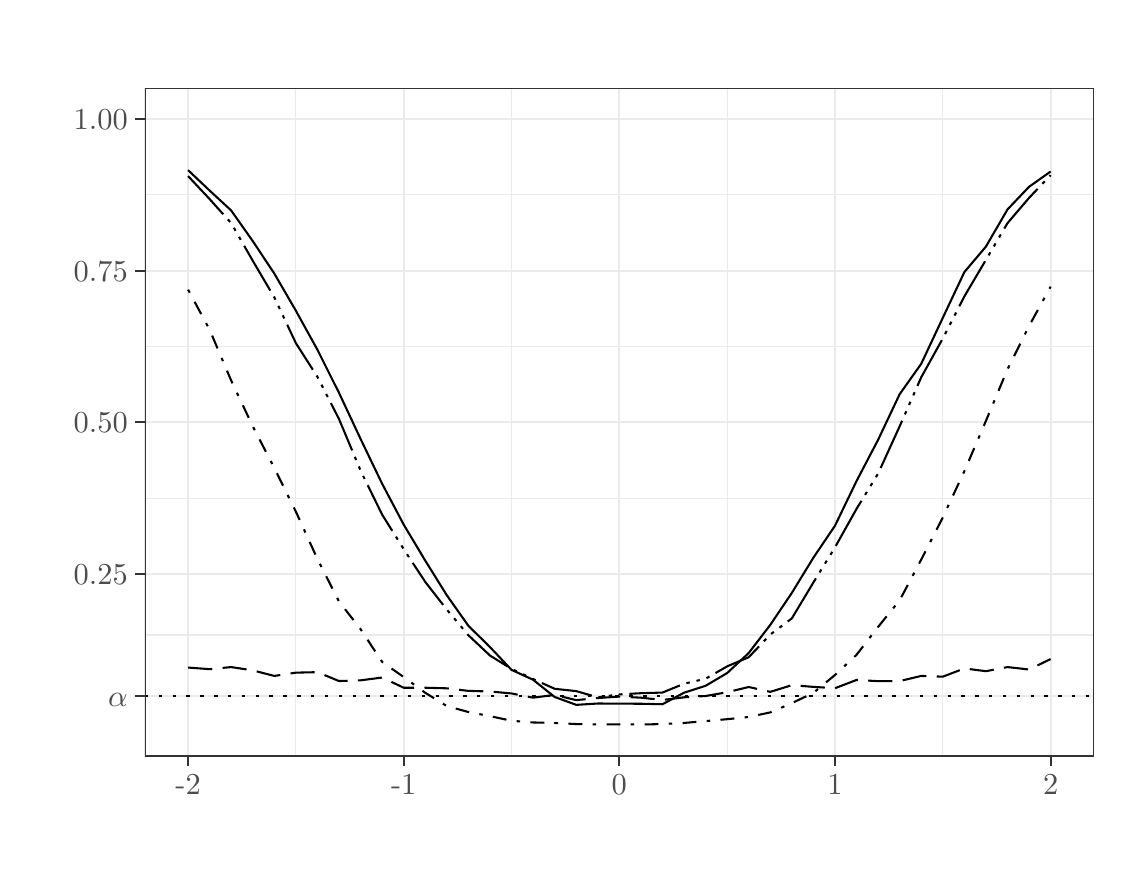} }   }   \\
	\mbox{ \subfigure{\includegraphics[width=2.6in]{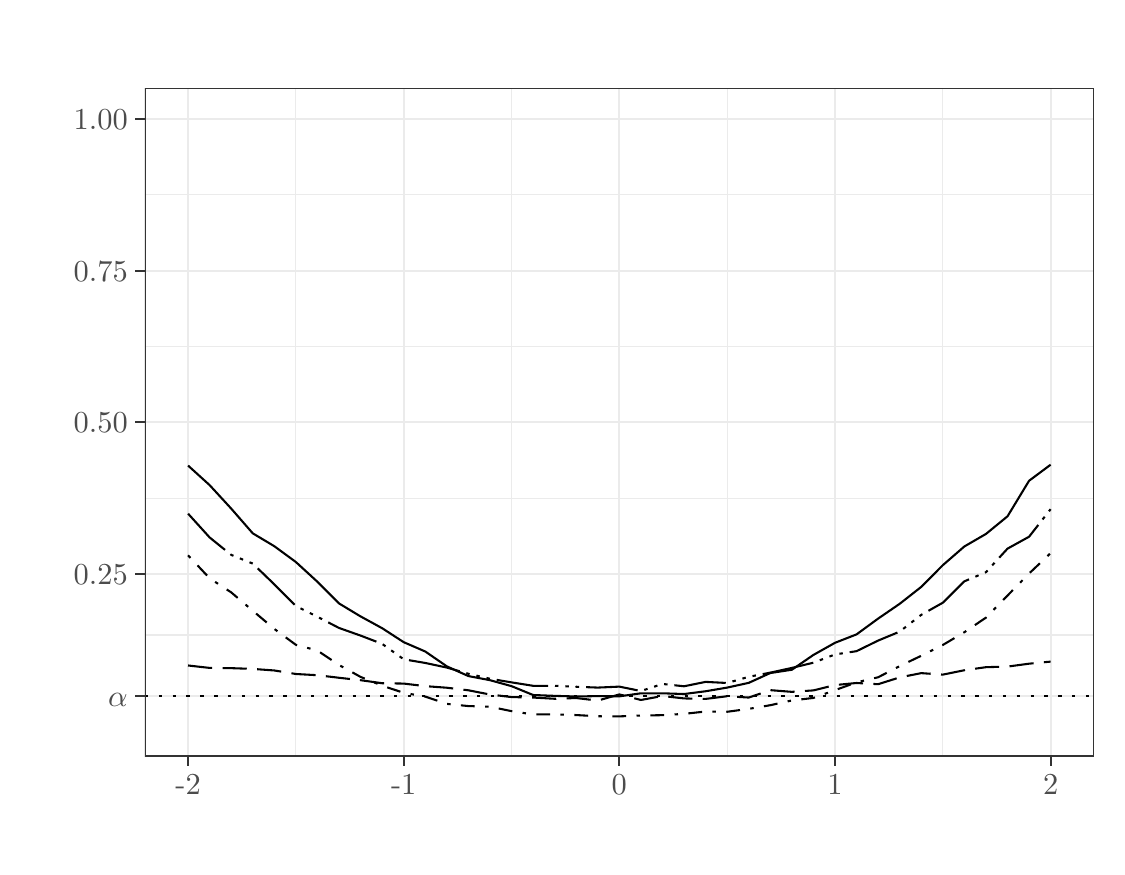} } \subfigure{\includegraphics[width=2.6in]{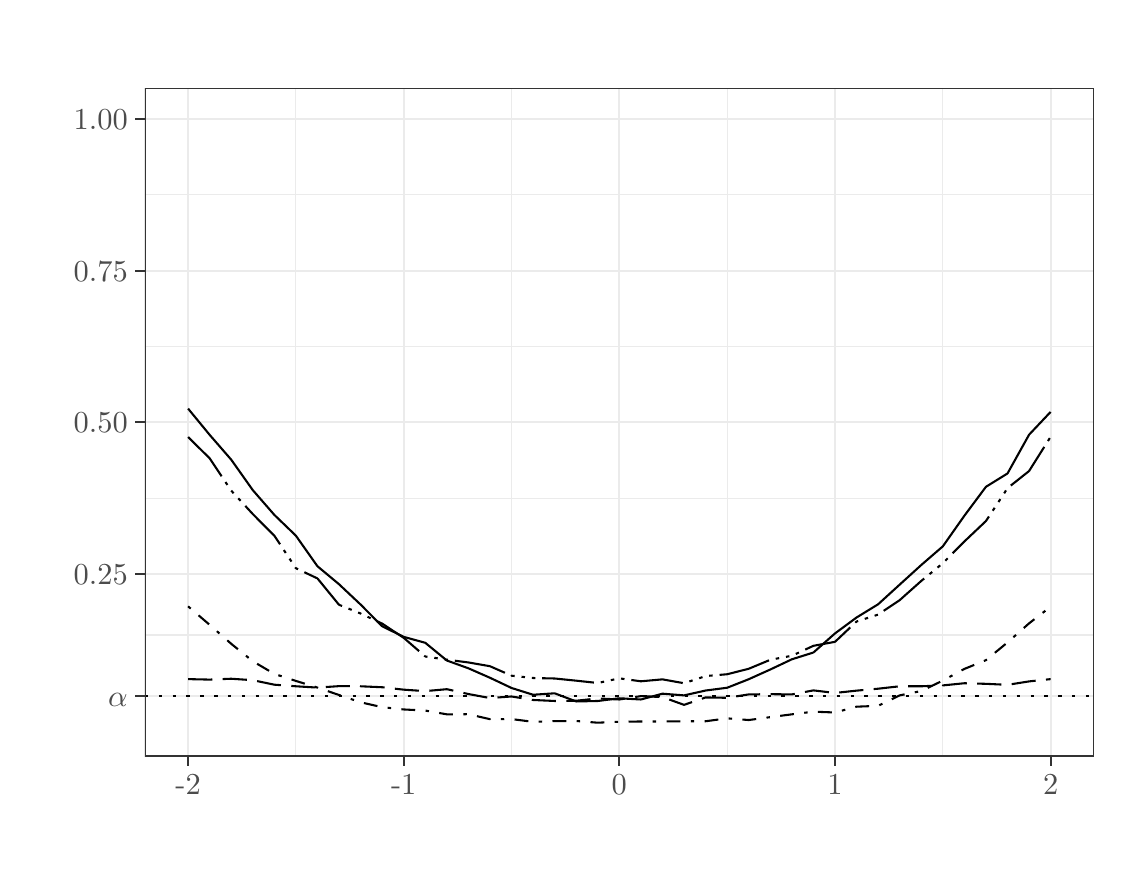} }   }  \\
	\mbox{\subfigure{\includegraphics[width=2.6in]{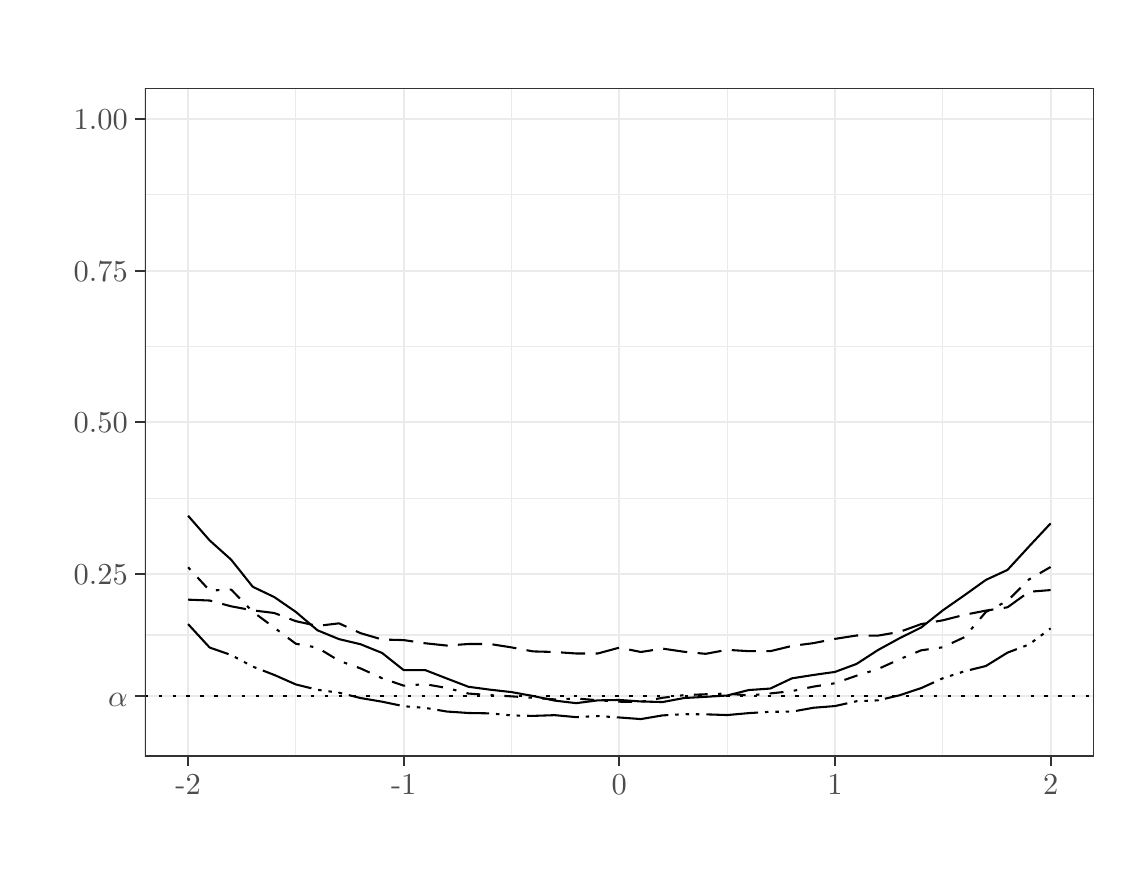} } \subfigure{\includegraphics[width=2.6in]{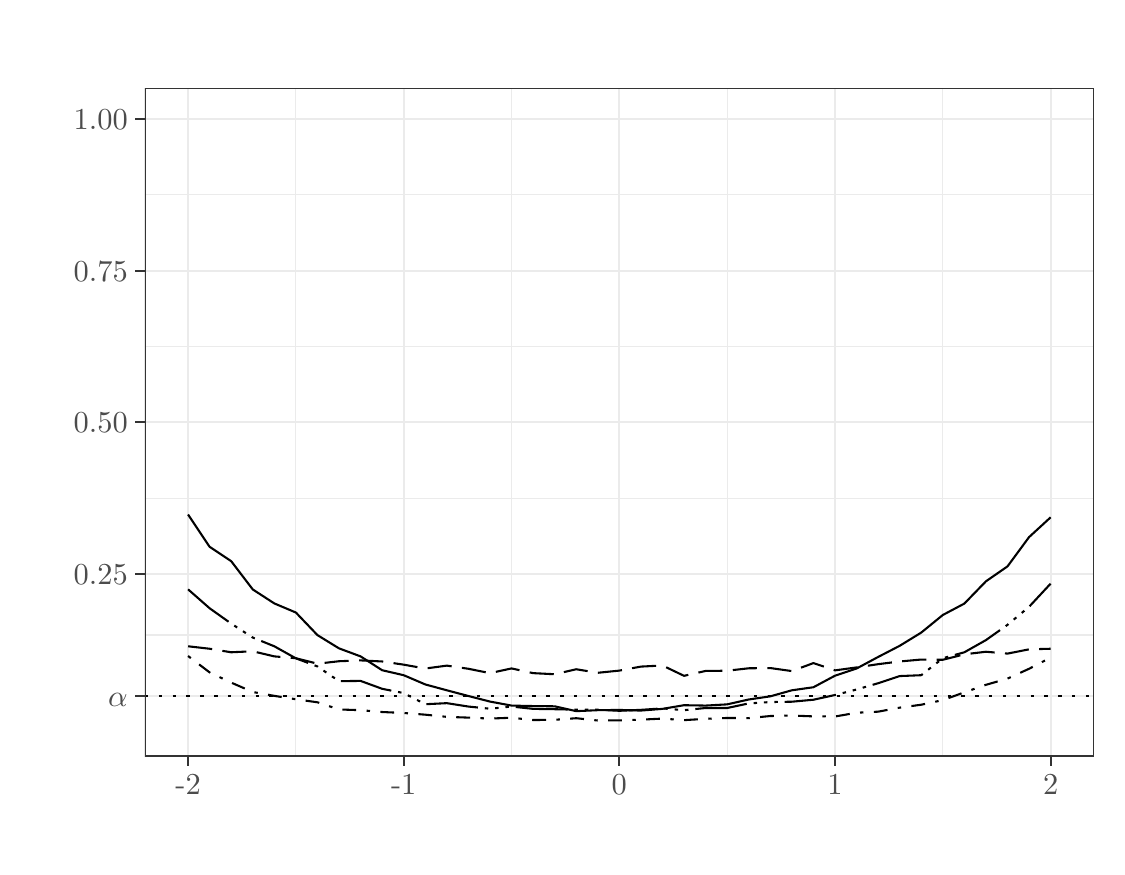} }   } \\
\mbox{\subfigure{\includegraphics[width=2.6in]{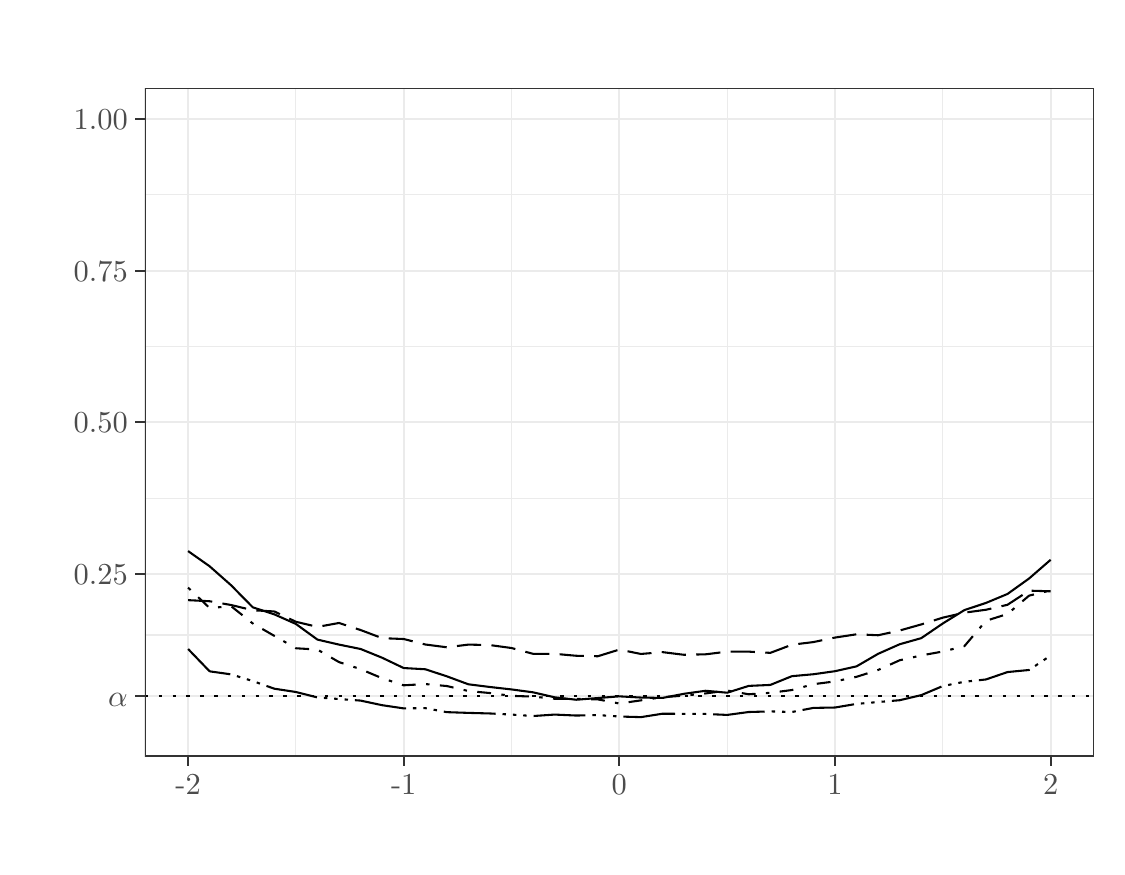}}\subfigure{\includegraphics[width=2.6in]{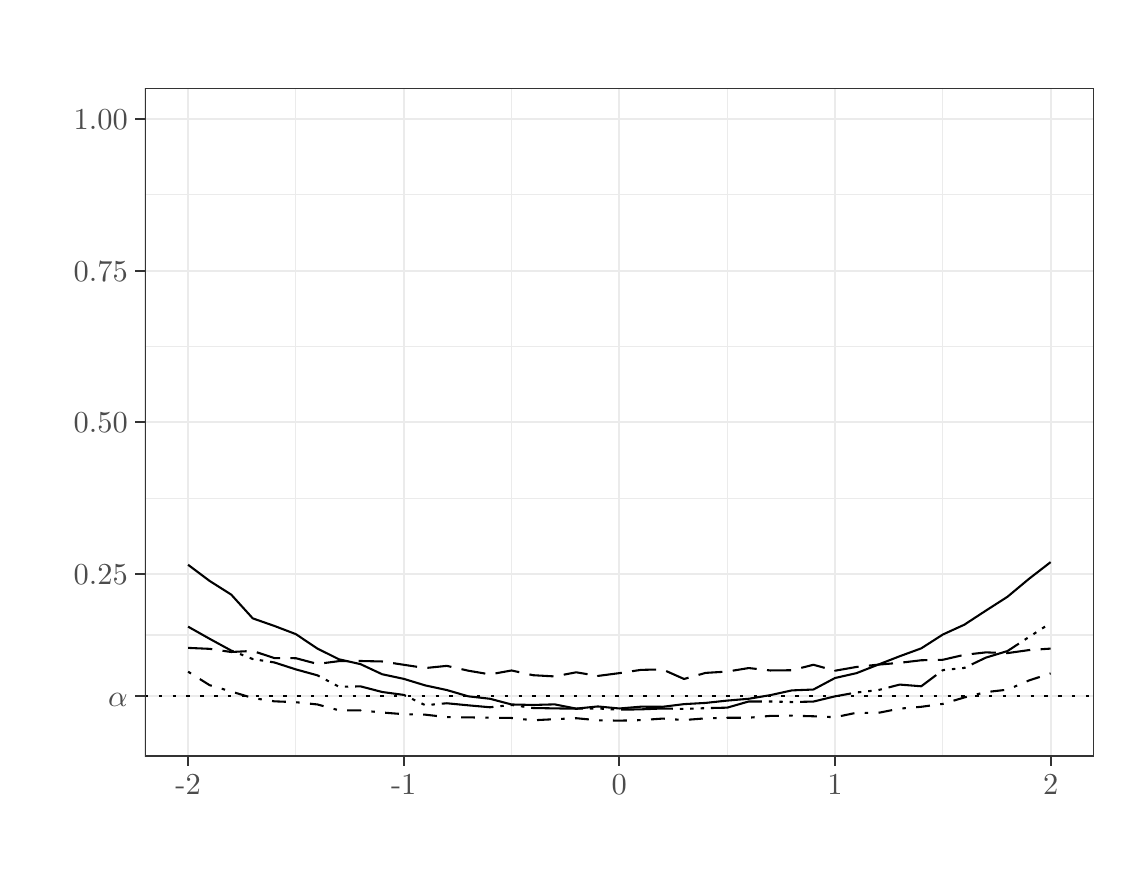} }}
	\caption{Power functions as a function of the magnitude of change with  $T = 200$ (first column) and $T = 500$ (second column) in case of iid standard normal (first row), GARCH (second row), AR(1) (third row), and ARMA(2,2) (fourth row) errors when $t^*=\lfloor T^{1/4}\rfloor$ using the R\'enyi statistic with trimming parameter $t_T=\log(T)$ (---), CUSUM, ($-\!\!$ $-$), Hidalgo--Seo ({\bf --- $\cdots$}) and Darling--Erd\H{o}s ($-$ $\cdot$ $-$) test statistics.}\label{fig-pow}

\end{figure}

\begin{figure}
  \centering
  \includegraphics[width=3.3in]{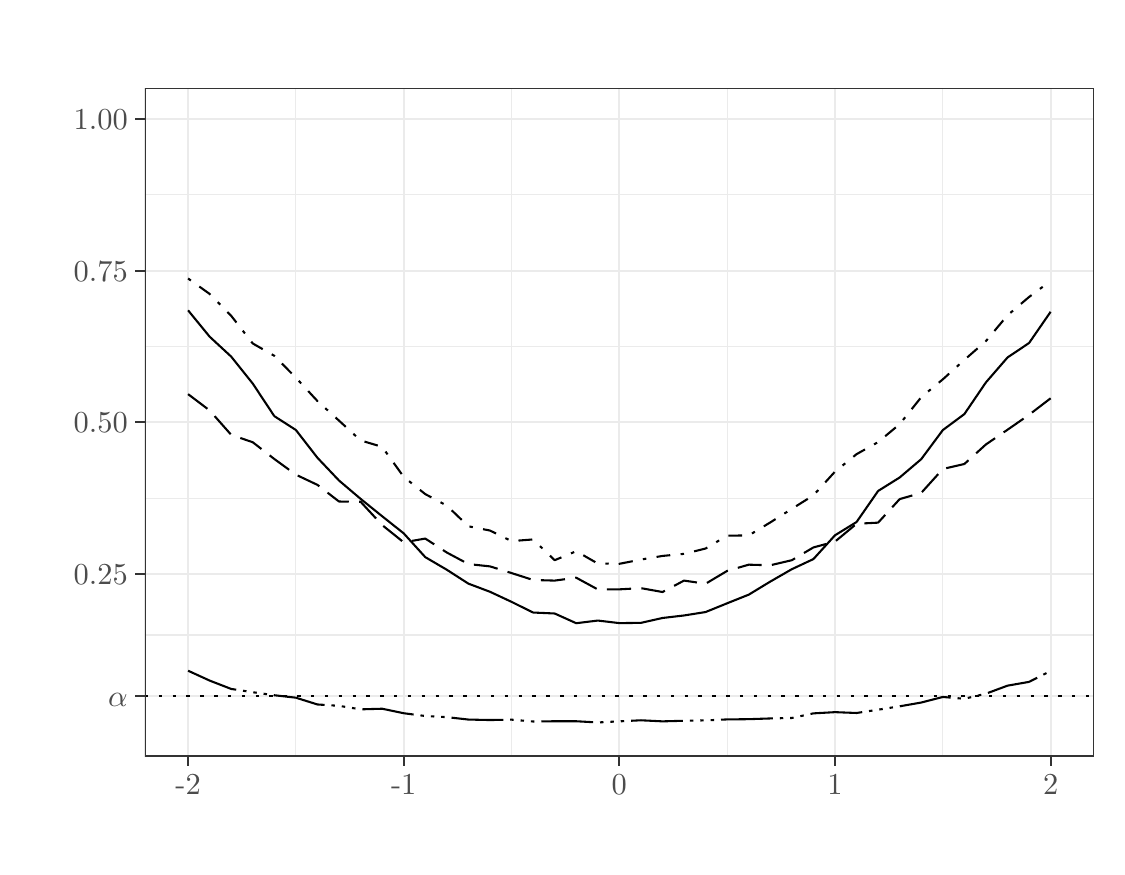}
  \caption{Power functions as a function of the magnitude of change with $T = 50$ and AR(1)  errors when $t^*=\lfloor T^{1/4}\rfloor$ using the R\'enyi statistic with trimming parameter $t_T=\log(T)$ (---), CUSUM, ($-\!\!$ $-$), Hidalgo--Seo ({\bf --- $\cdots$}) and Darling--Erd\H{o}s ($-$ $\cdot$ $-$) test statistics.}\label{fig-small-n}
\end{figure}

\section{Data Application: Application to change point testing in Fama-French factor models }\label{appl}

In this section we illustrate the R\'enyi-type test's ability to detect end-of-sample structural changes when compared to both CUSUM or the Darling-Erd\H{o}s statistic with an application to detecting structural change in the Fama-French five factor model coefficients fitted for a portfolio of banking sector stocks. In particular, we compare each statistics ability to detect a structural changes during the 2008 financial crisis.

The Fama-French five-factor model (Fama and French (2015)) is a linear regression model intended to describe the expected return of a security or portfolio of financial assets. The model takes the following form:

\begin{align}\label{ff5f}
	\begin{split}
		R_{t} - R_{Ft} &= \alpha + \beta_{M}(R_{Mt} - R_{Ft}) + \beta_{SMB} SMB_t \\
		&\;\;\;\; + \beta_{HML} HML_{t} + \beta_{RMW} RMW_t + \beta_{CMA} CMA_t + \epsilon_t.
	\end{split}
\end{align}

\noindent In \eqref{ff5f}, $R_t$ is the return of the security or portfolio at time $t$; $R_{Ft}$ is the risk-free rate of return; $R_{Mt}$ is the market return; $SMB_t$ is the return on a diversified portfolio of small stocks minus the return on a diversified portfolio of big stocks; $HML_t$ is the return of a portfolio of stocks with a high book-to-market (B/M) ratio minus the return of a portfolio of stocks with a low B/M ratio; $RMW_t$ is the returns of a portfolio of stocks with robust profitability minus a portfolio of stocks with weak profitability; and finally $CMA_t$ is the return of a portfolio of stocks with conservative investment minus the return of a portfolio of stocks with aggressive investment.

Kenneth French published data for estimating these coefficients on his website, along with portfolios he constructed. Among the portfolios are 49 industry portfolios constructed by assigning each stock on the NASDAQ, NYSE, and AMEX exchanges to an industry portfolio based on the company's four-digit SIC code. One of those portfolios represents the banking industry. Both value-weighted and equal-weighted returns are provided, and here we use the value-weighted returns. More details about the data can be found on Kenneth French's website (French (2017)).

We estimated the coefficients of \eqref{ff5f} for the banking portfolio using OLS on an expanding window of data with the starting point fixed at January 4th, 2005, and the end point varying between the trading days from August 1st to October 31st, 2008. The sample sizes then ranged from 901 days in the initial sample to 965 days for the final sample. Notably this period includes September 15th of that year on which day Lehman Brothers filed for bankruptcy. For each of these datasets we estimated the coefficients of \eqref{ff5f} and performed the CUSUM, Darling-Erd\H{o}s, and R\'enyi-type tests (with trimming parameter $t_T = \lfloor \log(T) \rfloor$ for the latter) on the residuals. We did this both with and without using the HAC variance estimation described in \eqref{simudef-2} with the Bartlett kernel and bandwidth selected using the method of Andrews (1991). We report results for the former case, since the difference was negligible. We used R and C++ for computations, including functionality provided by the \textbf{Rcpp} and \textbf{cointReg} packages; see Eddelbuettel (2013) and Aschersleben and Wagner (2016).

A visual inspection of the residuals (not shown) suggests they are heteroskedastic. We performed Ljung-Box and Box-Pierce tests on the squared residuals to test for this type of dependence for one lag, with both tests rejecting the null hypothesis of independence ($p < 0.001$).

\begin{figure}[t]\label{fig:bankexample}
	\centering
	\includegraphics[width=0.9\textwidth]{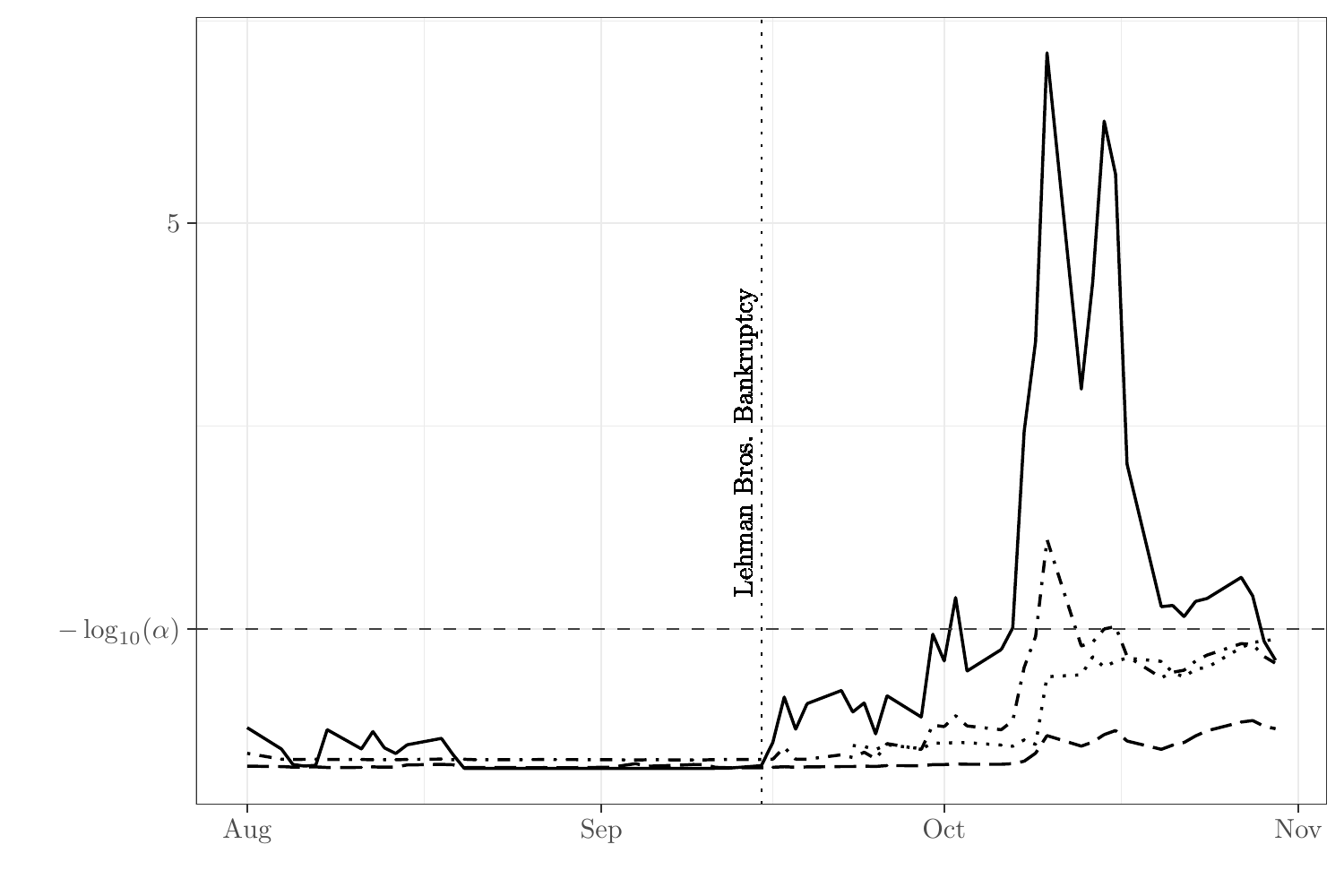}
	\caption{For $p$-value computed on a data set with end date indexed by the $x$-axis, we plot $-\log_{10}(p)$, which can be interpreted as the number of significant digits of that $p$-value. The solid line ({\bf ---}) is the R\'enyi-type statistic's $p$-value, the dotted line ($\cdots$) the CUSUM statistic's $p$-value, the dot-dash line ($-\cdot-$) the Darling-Erd\H{o}s statistic's $p$-value, and the long dashed line ({\bf ---  ---}) the $p$-values of the end-of-sample test of Andrews (2003). The horizontal line is at  $-\log_{10}{\alpha}$ with ($\alpha = 0.05$). Lehman Bros. bankruptcy on September 15th, 2008 is marked.}
\end{figure}

Figure \ref{fig:bankexample} shows the resulting p-value from each test indexed by the last date in the sample. We also compared these to the end-of-sample change point test of Andrews (2003), which assumes that the potential change point is specified by the user. As the potential change point location for Andrews (2003) method we use the date that Lehmann Brothers filed for bankruptcy, and as such the p-values for this test are only available for samples ending after this date.  From this figure we see that the R\'enyi-type test was able to retrospectively detect a structural change in the coefficients of \eqref{ff5f} with only half a month of data beyond September 15, 2008, which was roughly half a month before either the CUSUM or Darling-Erd\H{o}s tests detected changes. Interestingly, the method of Andrews (2003) was not able to detect a change in this period, which we believe can be attributed to the fact that their statistic is not as strongly weighted as, for example, the R\'enyi-type statistic.

\end{document}